\theoremstyle{plain}
\newtheorem{theorema}{Theorem}
\newtheorem{thm}{Theorem}[section]
\newtheorem{lem}[thm]{Lemma}
\newtheorem{corol}[theorema]{Corollary}
\newtheorem{conje}[theorema]{Conjecture}
\numberwithin{equation}{section}
\newcommand{\Tr}{{\rm Tr}}
\newcommand{\Stab}{{\rm Stab}}
\newcommand{\Ker}{{\rm Ker}}
\newcommand{\Aut}{\mathrm{Aut}}
\newcommand{\Out}{\mathrm{Out}}
\newcommand{\Irr}{\mathrm{Irr}}
\newcommand{\eps}{\epsilon}
\newcommand{\SL}{\mathrm{SL}}
\newcommand{\PSL}{\mathrm{PSL}}
\newcommand{\GL}{\mathrm{GL}}
\newcommand{\PGL}{\mathrm{PGL}}
\newcommand{\GU}{\mathrm{GU}}
\newcommand{\SU}{\mathrm{SU}}
\newcommand{\PSU}{\mathrm{PSU}}
\newcommand{\PGU}{\mathrm{PGU}}
\newcommand{\Sp}{\mathrm{Sp}}
\newcommand{\PSp}{\mathrm{PSp}}
\newcommand{\SO}{\mathrm{SO}}
\newcommand{\GO}{\mathrm{O}}
\newcommand{\F}{{\mathbb F}}
\newcommand{\Z}{{\mathbb Z}}
\newcommand{\ppd}{{\rm ppd}}
\newcommand{\ZB}{{\mathbf Z}}
\newcommand{\CB}{{\mathbf C}}
\newcommand{\NB}{{\mathbf N}}
\newcommand{\OB}{{\mathbf O}}
\newcommand{\QBS}{\mathsf{Q}}
\newcommand{\St}{\mathsf {St}}
\newcommand{\AAA}{\mathsf{A}}
\newcommand{\SSS}{\mathsf{S}}
\newcommand{\tw}[1]{{}^#1\!}
\renewcommand{\setminus}{\smallsetminus}
\newcommand{\meo}{\mathsf{meo}}
\begin{document}

\title{Non-congruence presentations of finite simple groups}

\author{William Y. Chen}
\address{William Y. Chen, Department of Mathematics\\
University of Illinois at Urbana-Champaign\\
Urbana, IL, 61801\\
U.S.A.}
\email{oxeimon@gmail.com}
\author{Alexander Lubotzky}
\address{Alexander Lubotzky, Faculty of Mathematics and Computer Science\\ 
The Weizmann Institute of Science\\
Rehovot 7610001\\ Israel}
\email{alex.lubotzky@mail.huji.ac.il}
\author{Pham Huu Tiep}
\address{Pham Huu Tiep, Department of Mathematics\\
    Rutgers University \\
    Piscataway, NJ 08854\\
    U.S.A.}
\email{pht19@math.rutgers.edu}

\thanks{The second author is grateful for the support of the ERC grant 882751, and by a research grant from the Center for New Scientists at the Weizmann Institute of Science.}
\thanks{The third author gratefully acknowledges the support of the NSF (grant DMS-2200850), the Simons Foundation, and the Joshua Barlaz Chair in Mathematics.}
\maketitle

\begin{abstract}
	We prove two results on some special generators of finite simple groups and use them to prove that every non-abelian finite simple group $S$ admits a non-congruence presentation (as conjectured in \cite{CLT}), and that if $S$ has a non-trivial Schur multiplier, then it admits a smooth cover (as conjectured in \cite{CFLZ}).
\end{abstract}

\tableofcontents

\section{Introduction}

In this paper we prove two theorems about some special generators of finite simple groups:

\begin{theorema}\label{main1}
Let $S$ be any finite non-abelian simple group. Then $S$ is generated by two elements $u$ and $v$ such that 
the integers $|u|$, $|v|$, and $|uv|$ are pairwise coprime.
\end{theorema}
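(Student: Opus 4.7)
The plan is to invoke the Classification of Finite Simple Groups and to treat each of its families separately. The sporadic groups and the Tits group are dispatched by direct computation, or by reading off generating pairs of the desired pairwise coprime orders from the ATLAS character tables. For the alternating groups $\alt_n$ with $n$ sufficiently large, one selects three distinct primes $p,q,r$ with $p+q+r$ only marginally exceeding $n$, and constructs $u,v \in \alt_n$ of orders $p$ and $q$ whose cycle structures are tuned so that $uv$ is an $r$-cycle on an $r$-set (with the remaining points fixed). Transitivity and primitivity of $\langle u,v\rangle$ are checked directly, after which Jordan's theorem (a primitive subgroup of $\sym_n$ containing a $p$-cycle with $p$ prime and $p \leq n-3$ must contain $\alt_n$) forces $\langle u,v\rangle = \alt_n$. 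The remaining small $n$ are verified by inspection.

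The substance of the proof is the Lie type case. For $G = G(q)$ a simple group of Lie type over $\F_q$, I would exploit the factorization of $|G|$ into cyclotomic polynomials $\Phi_d(q)$ to find three distinct Zsigmondy (primitive prime divisor) primes $\ell_1,\ell_2,\ell_3$ lying in three distinct cyclotomic factors. These primes are automatically pairwise distinct and coprime to the defining characteristic of $G$, and each $\ell_i$ is the order of a regular semisimple element contained in a specific class of maximal tori $T_i$. Picking $u \in T_1$ of order $\ell_1$ and $v \in T_2$ of order $\ell_2$, one controls the conjugacy class of $uv$ via Frobenius' structure-constant formula
\[
\#\{(x,y) \in C_1 \times C_2 : xy \in C_3\} = \frac{|C_1||C_2|}{|G|}\sum_{\chi \in \Irr(G)}\frac{\chi(u)\chi(v)\overline{\chi(z)}}{\chi(1)},
\]
where $z \in C_3$, invoking character-ratio bounds (in the tradition of Gluck, Liebeck--Shalev, Larsen--Shalev--Tiep) to show that the trivial-character contribution dominates. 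This produces some pair $(u,v)$ with $uv$ landing in a prescribed class $C_3$ of elements of order $\ell_3$. Generation is then forced by the classification of maximal subgroups that can simultaneously contain Zsigmondy elements of three unrelated cyclotomic types: no proper overgroup of $\langle u,v\rangle$ has room for such three tori, so $\langle u,v\rangle = G$.

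The principal obstacle is the low-rank, small-$q$, and exceptional cases---$\PSL_2(q)$, $\PSU_3(q)$, $\Sp_4(q)$, $\mathrm{Sz}(q)$, $\Ree(q)$, and small $G_2(q)$, together with the handful of groups for which a Zsigmondy prime fails to exist---where primitive prime divisors are scarce or absent, the character-ratio estimates are weaker, and the maximal subgroup structure does not immediately exclude a proper overgroup containing the candidate generators. For these residual cases I expect to need tailored explicit constructions of $(u,v)$, supported by ATLAS character-table data or computer algebra verifications, and this bookkeeping is likely to constitute the bulk of the technical work.
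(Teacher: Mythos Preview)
Your proposal is essentially the paper's strategy: a CFSG case split, ATLAS/machine verification for sporadics, and for groups of Lie type the triple of regular semisimple elements in tori governed by Zsigmondy primes, with the Frobenius structure-constant formula plus character-ratio bounds to hit a prescribed third class, and then a maximal-subgroup sieve (via \cite{GPPS}, \cite{BHR}, \cite{LM}, etc.) to force generation. One practical divergence worth flagging: in low rank the paper frequently does \emph{not} aim for a third Zsigmondy prime but instead takes $uv$ to be a unipotent element (often a transvection or regular unipotent, so $|uv|$ is a power of the defining characteristic $p$), which both simplifies the coprimality check and gives extra leverage in the maximal-subgroup analysis (e.g.\ Jordan-block shape rules out extension-field subgroups). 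Your plan to insist on three semisimple Zsigmondy classes will make the low-rank bookkeeping harder than necessary.

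For alternating groups your route genuinely differs. You propose prime-order $u,v$ with $uv$ a short prime cycle, then Jordan's theorem on primitive groups containing a prime cycle of length $\le n-3$. The paper instead takes $u$ an $n$-cycle (resp.\ $(n-1)$-cycle) and $v$ an $(n-2)$-cycle (resp.\ $(n-3)$-cycle) for $n$ odd (resp.\ even), uses a result of Herzog--Kaplan--Lev that $u^{\SSS_n}v^{\SSS_n}=\AAA_n\setminus\{1\}$ to force $uv$ into a convenient class, and then argues $4$-transitivity directly, invoking the CFSG classification of $4$-transitive groups. Your approach is more classical and avoids the \cite{HKL} input, but you should be aware that ``tuning cycle structures so that $uv$ is an $r$-cycle'' while simultaneously arranging transitivity on all $n$ points is a genuine constraint: if $u$ and $v$ are single prime cycles whose supports together cover $\{1,\dots,n\}$, then $n\le p+q$, which is in tension with your stated condition $p+q+r$ only marginally exceeding $n$ unless $r$ is quite small. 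This is fixable (e.g.\ allow $u$ or $v$ to be products of disjoint $p$-cycles, or overlap supports more aggressively), but the construction is less immediate than your sketch suggests.
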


\begin{theorema}\label{main2}
Let $S$ be any finite non-abelian simple group with nontrivial Schur multiplier.
Then $S$ admits a {\em smooth covering} in the sense of \cite{CFLZ}, i.e. there exist a finite quasisimple group $G$ with 
$S \cong G/\ZB(G)$, $\ZB(G)  \neq 1$, and two elements $x,y \in G$ such that $G = \langle x,y \rangle$, and each of the elements 
$x$, $y$, $xy$ has the same orders in $G$ and in $G/\ZB(G)$.  
\end{theorema}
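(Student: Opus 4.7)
The plan is to combine Theorem A with a lifting argument for the central extension $1 \to Z \to G \to S \to 1$, where $G$ is a quasisimple cover of $S$ with $Z = \ZB(G) \neq 1$ and projection $\pi \colon G \to S$. The goal is to produce generators $x, y$ of $G$ whose orders $|x|, |y|, |xy|$ match those of their images $u = \pi(x)$, $v = \pi(y)$, $uv = \pi(xy)$ in $S$. Generation of $G$ comes for free once the matching-order condition is achieved together with $\langle u, v\rangle = S$: we then have $\langle x, y\rangle \cdot Z = G$, and since $G$ is quasisimple (hence perfect), $G/\langle x, y\rangle$ is both a quotient of the abelian group $Z$ and perfect, hence trivial.

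First I would apply Theorem A to obtain $u, v$ generating $S$ with $|u|, |v|, |uv|$ pairwise coprime. The key lifting fact is that if $\gcd(n, |Z|) = 1$, the $n$-th power map on $Z$ is a bijection, so any $w \in S$ of order $n$ admits a unique lift $\tilde w \in G$ with $\tilde w^n = 1$. My first technical task is therefore to strengthen Theorem A so that $|u|, |v|, |uv|$ are, in addition to being pairwise coprime, each coprime to $|Z|$. Then the lifts $x, y$ of $u, v$ with $|x| = |u|, |y| = |v|$ are uniquely determined, and it remains only to verify $(xy)^{|uv|} = 1$ in $Z$. Since $x, y$ are pinned down by coprimality, the needed freedom must come from varying the pair $(u, v)$ within the many pairs guaranteed by Theorem A. I would invoke Frobenius' character-sum formula to count ordered pairs in $\tilde C_u \times \tilde C_v$ whose product lies in the order-preserving lift $\tilde C_{uv}$, isolate the contribution from characters factoring through $S$ (positive by existence of generating triples in $S$), and bound the contribution from the faithful characters of $G$ to conclude that this count is positive.

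The hardest step is the strengthening of Theorem A to achieve additional coprimality with $|Z|$. Generically this should go through, since $|M(S)|$ involves only a small set of primes and Theorem A's construction admits many possible order triples, but low-rank exceptions persist. For example, $A_5$ has $|M(A_5)| = 2$ and element orders contained in $\{1,2,3,5\}$, so any pairwise coprime triple of orders must include $2$, which is incompatible with coprimality to $|Z|$. In such cases one must abandon strict pairwise coprimality and instead use other convenient triples (for example, $(3,5,5)$-triples in $A_5$), verifying the smoothness condition by direct computation in the explicit matrix cover, or pass to a proper quasisimple subcover whose center is a cyclic subgroup of $M(S)$ chosen to avoid the problematic primes. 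I therefore expect the argument to split into cases according to family (alternating, classical, exceptional, sporadic), with tailored constructions or character computations handling the small-rank and exceptional instances where the generic Frobenius-style mechanism breaks down.
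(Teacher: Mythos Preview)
Your proposal is essentially the strategy the paper carries out: a case-by-case analysis through the classification, choosing triples with orders coprime to $|\ZB(G)|$, using Frobenius' formula with the character sum split into non-faithful and faithful parts to produce a triple in $G$ with prescribed class data, and handling small cases (such as $\AAA_5$, where pairwise coprimality with $|\ZB(G)|$ is impossible) by direct computation. The paper organizes this slightly differently---it works in $G$ from the outset rather than lifting from $S$, imposing conditions (A), (B), (C) of \S\ref{section_coprime} directly on elements of $G$ and proving Theorems~\ref{main1} and~\ref{main2} in tandem---but the content is the same. Your generation argument for $G$ (via perfectness and $\langle x,y\rangle Z = G$) is correct.

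One point you gloss over deserves attention. Your Frobenius count produces \emph{some} $(x,y) \in \tilde C_u \times \tilde C_v$ with $xy \in \tilde C_{uv}$, but the images $(\pi(x),\pi(y))$ need not be the original generating pair $(u,v)$---only a pair with the same conjugacy-class data in $S$. For your argument to close, you need the stronger statement that \emph{every} pair in $S$ with these class data generates $S$, not merely that one such pair does. The paper secures this via its condition (C): no maximal subgroup of $S$ has order divisible by $\mathrm{lcm}(|u|,|v|,|uv|)$ (or a finer analysis of maximal subgroups meeting the relevant classes). This is strictly stronger than the bare statement of Theorem~\ref{main1}, and verifying it is in fact where most of the paper's case-by-case work goes. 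Your outline is correct once this is made explicit, but ``Theorem~A plus a lifting argument'' undersells the input required.
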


These results are of some interest for their own sake, but our interest arose from their applications. Theorem 2 proves Conjecture 1.10 of \cite{CFLZ}, and Theorem 1 proves Conjecture 6.3 of \cite{CLT}. To explain this latter conjecture, we will need the notion of a \emph{non-congruence} presentation.

\newcommand{\Inn}{\operatorname{Inn}}
\newcommand{\Epi}{\operatorname{Epi}}
\newcommand{\ra}{\rightarrow}
\newcommand{\rightiso}{\stackrel{\sim}{\longrightarrow}}
\newcommand{\cH}{\mathcal{H}}
\newcommand{\spmatrix}[4]{\left[\begin{smallmatrix}#1&#2\\#3&#4\end{smallmatrix}\right]}
\newcommand{\pr}{\operatorname{pr}}

We begin with some generalities. Let $H$ be a group and $N\le H$ a finite index normal subgroup. Let $A := \Aut(H)$, and let
\begin{equation}
A_N := \{\gamma\in A\;|\;\gamma(N) = N \text{ and $\gamma$ induces the identity on $H/N$}\}.
\end{equation}

The group $A_N$ is the (principal) $H$-congruence subgroup of $A$ associated to $N$. A subgroup of $A$ is $H$-congruence if it contains $A_N$ for some normal finite index subgroup $N\le H$. Similarly, we say that a subgroup of $\Out(H) := \Aut(H)/\Inn(H)$ is $H$-congruence if it contains the image of some $A_N$. We should mention $H$ in the notion of ``congruence'', since the same abstract group can be the outer automorphism group of different groups $H$.

The example most relevant to us is the case of $H = H_1 := \Z^2$ and $H = H_2 := F_2$, the free group of rank 2. In both cases, $\Out(H)\cong\GL_2(\Z)$, the isomorphism being given as follows: there is a natural map $\Aut(F_2)\ra\Aut(F_2/F_2') = \Aut(\Z^2)$, which by a classical result of Nielsen induces an isomorphism $\Out(F_2)\rightiso\Aut(\Z^2) = \GL_2(\Z)$. In this case, to be consistent with the number-theoretic literature, we will instead work inside the index 2 subgroup $\Aut^+(F_2)\le\Aut(F_2)$ consisting of automorphisms which act with determinant $+1$ on $\Z^2$, and its image $\Out^+(F_2)\cong\SL_2(\Z)$ inside $\Out(F_2)\cong\GL_2(\Z)$. Accordingly, for a finite index normal subgroup $N\le F_2$, we define the principal $F_2$-congruence subgroups $\Gamma_N\le\Out^+(F_2)$ by
\begin{equation}\label{eq_Gamma_N}
\Gamma_N := \{\gamma\in \Aut^+(F_2) \;|\;\gamma(N) = N \text{ and $\gamma$ induces an inner automorphism of $F_2/N$}\}/\Inn(F_2).
\end{equation}

As in the general case, we say that a subgroup of $\Out^+(F_2)$ is $F_2$-congruence if it contains $\Gamma_N$ for some $N$.

The $\Z^2$-congruence subgroups of $\Out^+(F_2)\cong\SL_2(\Z)$, defined analogously, are the classical congruence subgroups, which are classically defined as the subgroups containing, for some integer $n\ge 1$, the principal level $n$ congruence subgroup
$$\Gamma(n) := \Ker(\SL_2(\Z)\ra\SL_2(\Z/n)).$$
Henceforth, a \emph{congruence} subgroup of $\SL_2(\Z)$ will refer to a classical (i.e., $\Z^2$-) congruence subgroup.
While the classical ($\Z^2$-)congruence subgroups have attained a legendary status in number theory via its connections to elliptic curves and modular forms, the well-known failure of the \emph{congruence subgroup property} for $\SL_2(\Z)$ states that $\SL_2(\Z)$ also admits finite index subgroups which are not ($\Z^2$-)congruence. Such subgroups, typically called ``non-congruence'', are far less well understood. Despite its name, a breakthrough result of Asada \cite{Asa01} using algebraic geometry, later translated into group theory in \cite{BER11, BEL17}, asserts that \emph{every} finite index subgroup of $\SL_2(\Z)$ is $F_2$-congruence. In particular, every non-congruence subgroup is $F_2$-congruence.

Asada's result led the first author to give moduli interpretations to non-congruence modular curves \cite{Chen18} in terms of ``non-abelian level structures'', and suggests the following fundamental question:

\begin{equation}\label{eq_question}\parbox{14cm}{
Given an epimorphism $\varphi : F_2\ra G$ with $G$ finite, when is $\Gamma_\varphi := \Gamma_{\ker\varphi}$ a classical congruence subgroup of $\SL_2(\Z)$?
}
\end{equation}

\smallskip


In what follows, an epimorphism $\varphi : F_2\ra G$ will also be called a (2-generator) presentation of $G$. We say that $\varphi$ is a {\em congruence} (resp. {\em non-congruence}) presentation of $G$ if $\Gamma_\varphi = \Gamma_{\ker\varphi}$ is a classical ($\Z^2$-)congruence subgroup of $\SL_2(\Z)$ or not.

It is easy to see that for abelian $G$, every (2-generator) presentation of an abelian group is congruence. A more subtle result, independently discovered by the first author in a joint work with Deligne \cite{CD17}, and in a different language by Ben-Ezra \cite{BE16}, is that the same also holds for any metabelian $G$, i.e., all presentations of finite 2-generated metabelian groups are congruence.

Computations done in \cite{Chen18} show that amongst 2-generated groups $G$ of order $\le 255$, all presentations of solvable length 4 groups are non-congruence, and solvable length 3 groups only sometimes admit non-congruence presentations. Moreover, among non-abelian finite simple groups of order $\le |\text{J}_1| = 175560$, where $\text{J}_1$ is the Janko group, the overwhelming majority of presentations are non-congruence\footnote{Data for all 2-generated groups of order $\le 255$, as well as the finite simple groups of order $\le |\text{Sz}(8)| = 29120$ can be found in the appendix of \cite{Chen18}.}. These results suggest the philosophy that the existence of non-congruence presentations is a measure of ``non-abelianness''. In accordance with this philosophy, we will use Theorem \ref{main1} to show

\begin{corol}[c.f. {\cite[Conjecture 6.3]{CLT}}]\label{cor_non-congruence}
	Every non-abelian finite simple group admits a \emph{totally} non-congruence presentation.
\end{corol}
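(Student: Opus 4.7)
The plan is to deduce the corollary from Theorem \ref{main1} by invoking a sufficient criterion for total non-congruence phrased in terms of the orders of the ``special'' generators $u$, $v$, and their product $uv$. Step one is to apply Theorem \ref{main1} to the given non-abelian finite simple group $S$, producing a generating pair $(u,v)$ of $S$ with $|u|, |v|, |uv|$ pairwise coprime (each being $\ge 2$, as $S$ is non-abelian simple). Define $\varphi : F_2 \to S$ by $x \mapsto u$, $y \mapsto v$, and consider $\Gamma_\varphi \le \SL_2(\Z)$ as in \eqref{eq_Gamma_N}.

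Step two is to reformulate the claim that $\varphi$ is totally non-congruence, which I take to mean that the congruence closure of $\Gamma_\varphi$ in $\SL_2(\Z)$ is all of $\SL_2(\Z)$; equivalently, the image of $\Gamma_\varphi$ under every reduction $\SL_2(\Z)\twoheadrightarrow \SL_2(\Z/N)$ is surjective. Under the identification $\SL_2(\Z) \cong \Out^+(F_2)$, the coset space $\Gamma_\varphi \backslash \SL_2(\Z)$ is $\SL_2(\Z)$-equivariantly identified with the orbit of $[\varphi]$ in $\Epi(F_2, S)/\Inn(S)$, so surjectivity mod $N$ is equivalent to $\Gamma(N)$ acting transitively on this orbit. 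The further reduction $\SL_2(\Z)\twoheadrightarrow \SL_2(\Z/2)\cong S_3$ partitions the cusps of $\Gamma_\varphi\backslash \H^*$ into three classes, along which the Nielsen invariant triple $\{|u'|, |v'|, |u'v'|\}$ is permuted, and the widths of cusps in the three classes divide $|u|$, $|v|$, $|uv|$ respectively.

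Step three is the Nielsen-class argument exploiting coprimality. By Wohlfahrt's theorem, any proper congruence subgroup containing $\Gamma_\varphi$ has generalized level $N$ divisible by every cusp width of $\Gamma_\varphi$. Since the three cusp-width types are divisors of the pairwise coprime integers $|u|, |v|, |uv|$, prime-by-prime analysis of the $\Gamma(N)$-action on the orbit (broken up via CRT) forces the image of $\Gamma_\varphi$ in $\SL_2(\Z/N)$ to be surjective for every $N$. This reduction, from total non-congruence to the coprimality of $|u|, |v|, |uv|$, is precisely what reduces \cite[Conjecture 6.3]{CLT} to the existence statement in Theorem \ref{main1}.

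The main obstacle—and the substantive new content—is Theorem \ref{main1} itself (the existence of pairwise-coprime triples $(|u|, |v|, |uv|)$ in every non-abelian finite simple group), whose proof occupies the bulk of the paper. The Nielsen-class reduction from Theorem \ref{main1} to Corollary \ref{cor_non-congruence} is comparatively short and is the content of the setup in \cite{CLT}.
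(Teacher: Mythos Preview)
Your outline has the right shape—reduce to Theorem~\ref{main1} and then use pairwise coprimality of $|u|,|v|,|uv|$ to force density of $\Gamma_\varphi$ in $\SL_2(\widehat{\Z})$—but Step~3 is where the argument actually lives, and as written it does not go through. The Wohlfahrt statement is misapplied (and in fact misstated: if $\Gamma'\supseteq\Gamma_\varphi$ is congruence, the cusp widths of $\Gamma'$ \emph{divide} those of $\Gamma_\varphi$, so the generalized level of $\Gamma'$ divides, rather than is divisible by, the lcm of the cusp widths of $\Gamma_\varphi$). Knowing only that the cusp widths of $\Gamma_\varphi$ divide $|u|,|v|,|uv|$ does not by itself force the congruence closure to have level~$1$; the sentence ``prime-by-prime analysis \ldots\ forces the image \ldots\ to be surjective'' is precisely the content that needs proof, and your sketch supplies no mechanism for it.

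The paper's argument (Theorem~\ref{thm_non-congruence_criterion_A}) makes that mechanism explicit and avoids Wohlfahrt entirely. Setting $z=(uv)^{-1}$, one passes to the three \emph{conjugate} stabilizers $\Gamma_{\varphi_{u,v}}$, $\Gamma_{\varphi_{z,u}}$, $\Gamma_{\varphi_{v,z}}$ and observes that each contains an explicit pair of elementary matrices
\[
\begin{pmatrix}1&a\\0&1\end{pmatrix},\quad \begin{pmatrix}1&0\\b&1\end{pmatrix}
\]
with $(a,b)$ running over $(|u|,|v|)$, $(|z|,|u|)$, $(|v|,|z|)$. For any prime $p$, pairwise coprimality guarantees that at least one of these pairs has both entries prime to $p$, so that stabilizer surjects onto $\SL_2(\Z/p^r)$; conjugacy transfers this to all three stabilizers; and since the factors $\SL_2(\Z/p_j^{r_j})$ for distinct $p_j$ share no nontrivial quotient, a Goursat argument assembles surjectivity onto $\SL_2(\Z/N)$ for every $N$. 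The two ingredients missing from your sketch are exactly the explicit \emph{lower} unipotent in $\Gamma_\varphi$ (cusp-width language only hands you the upper one at $\infty$) and the use of the three conjugate stabilizers to cover every prime.
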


Here, a presentation $\varphi : F_2\ra G$ is \emph{totally noncongruence} if it is noncongruence and the smallest congruence subgroup containing $\Gamma_\varphi$ is $\SL_2(\Z)$ (see \S\ref{section_noncongruence}). Corollary \ref{cor_non-congruence} strengthens and generalizes the results of \cite[\S4.4]{Chen18}, which showed that the groups $A_n$ ($n\ge 5$) and $\PSL_2(p)$ ($p\ge 5$ prime) admit non-congruence presentations.

Note that a single group can admit both congruence and noncongruence presentations. This can even happen for a non-abelian finite simple group, the smallest examples being that of $G = \PSU_3(\F_4),\PSU_3(\F_5)$. In both cases, $G$ admits a single $\Aut(G)$-orbit of congruence presentations, every other presentation being noncongruence.\footnote{These groups have order 62400 and 126000 respectively, and are the only non-abelian simple groups of order $\le |\text{J}_1|$ to admit congruence presentations. The total number of presentations is approximately $|G|^2$, see \cite{KL90, LS95}.} In particular, these exceptional congruence presentations are \emph{characteristic}, in the sense that their kernels are characteristic subgroups of $F_2$. In our recent paper \cite{CLT}, we showed that these examples belong to an infinite family, containing characteristic presentations of the groups $\PSL_3(\F_q)$ and $\PSU_3(\F_q)$ for all $q\ge 7$, constructed using appropriate specializations of the Burau representation of the braid group $B_4$.

Despite the infinitude of such examples, we expect that they are rare. Indeed, a congruence presentation of a nonabelian simple group would correspond to a functorial method of constructing, from an isogeny of elliptic curves $E_1\ra E_2$, a non-abelian finite simple cover of $E_2$, only ramified above the origin \cite{Chen18}. Accordingly, we suggest the following

\begin{conje} Almost all presentations of a non-abelian finite simple group $S$ are non-congruence, in the following sense:
$$\lim_{|S|\ra\infty}\frac{\#\text{non-congruence presentations}}{\#\text{all presentations}} = 1$$
Here, a presentation is understood to be an epimorphism $F_2\ra S$.
\end{conje}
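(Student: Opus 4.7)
The plan is to bound the number of congruence presentations by $o(|\Epi(F_2,S)|)=o(|S|^2)$; by Kantor--Lubotzky \cite{KL90} and Liebeck--Shalev \cite{LS95}, one has $|\Epi(F_2,S)|=|S|^2(1+o(1))$, so this would establish the conjecture. The starting observation is that for $\varphi$ determined by a generating pair $(u,v)$ with $\Gamma_\varphi\supseteq\Gamma(n)$, the $\SL_2(\Z)$-orbit of $[\varphi]\in\Epi(F_2,S)/\Inn(S)$ has size $[\SL_2(\Z):\Gamma_\varphi]\le\min(|\SL_2(\Z/n)|,\,|\Epi(F_2,S)|/|S|)$ and is transitive under $\SL_2(\Z/n)$, which immediately restricts the possible levels $n$ relative to $|S|$.

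First I would translate $\Gamma_\varphi\supseteq\Gamma(n)$ into algebraic constraints on $(u,v)$. Since $\Gamma(n)$ contains $T^n=\bigl(\begin{smallmatrix}1&n\\0&1\end{smallmatrix}\bigr)$, membership $T^n\in\Gamma_\varphi$ requires $(u,u^nv)$ to be $\Inn(S)$-equivalent to $(u,v)$, i.e.\ some $g\in C_S(u)$ satisfies $gvg^{-1}=u^{-n}v$. For generating pairs with small $C_S(u)$---the generic case by standard Guralnick--Shalev-type analysis---this forces $u^n=1$; applying the same reasoning to the other standard generators of $\Gamma(n)$ forces $v^n=(uv)^n=1$. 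Thus, up to handling non-generic centralizer cases, a congruence presentation at level $n$ requires $|u|,|v|,|uv|$ all to divide $n$. Theorem \ref{main1} is suggestive here: the pairs of pairwise coprime orders it produces can only be congruence at levels $n$ divisible by the large integer $|u|\cdot|v|\cdot|uv|$, a strong restriction. I would then count such constrained pairs via Frobenius: the number of $(u,v)\in S^2$ with $u^n=v^n=(uv)^n=1$ equals $|S|\sum_{\chi\in\Irr(S)}P_n(\chi)^3/\chi(1)$, where $P_n(\chi):=|S|^{-1}\sum_{x^n=1}\chi(x)$. Using Liebeck--Shalev-type bounds on character values at bounded-order elements and Fulman--Guralnick estimates on the density of elements of prescribed order, one obtains polynomial-in-$n$ bounds for $P_n(\chi)$, and summing over $n$ up to the exponent of $S$ should give a bound of the form $O(|S|^{2-\delta})$ for some $\delta>0$.

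The main obstacle is making the ``generic centralizer is small'' heuristic rigorous and uniform, especially for classical Lie-type groups $S$ whose semisimple centralizers can contain large Levi subgroups, and separately handling the non-generic orbits. For these, one likely needs to combine the character-theoretic enumeration with the moduli-theoretic rigidity of \cite{Chen18}: a congruence presentation of $S$ at level $n$ corresponds to a finite Galois $S$-cover of the open modular curve $Y(n)$, étale above the cusps, and such covers are classified by arithmetic Hurwitz data which becomes increasingly restrictive as $|S|\to\infty$, in a manner reminiscent of the $\PSL_3/\PSU_3$ rigidity results of \cite{CLT}. I expect the final proof to combine both the character-theoretic and moduli-theoretic viewpoints, with Theorem \ref{main1} providing a large explicit family of non-congruence presentations that realizes the asymptotic lower bound.
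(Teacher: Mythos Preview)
The statement you are attempting to prove is Conjecture~D in the paper, and it is genuinely stated there as an \emph{open conjecture}: the paper offers no proof, partial proof, or even heuristic argument beyond the computational evidence mentioned in the introduction. There is therefore nothing to compare your proposal against. What you have written is not a proof but a research outline for an open problem.

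That said, a few remarks on the outline itself. Your reduction from $T^n\in\Gamma_\varphi$ to $u^n=1$ is not correct as stated: the condition yields only that $u^n=[g,v]$ for some $g\in C_S(u)$, and ``$C_S(u)$ small'' does not force this commutator to be trivial unless $C_S(u)=\langle u\rangle$ (or at least $C_S(u)\cap C_S(v)$-related constraints hold). Even granting the constraint $|u|,|v|,|uv|\mid n$ for generic pairs, you then need to sum your Frobenius count over \emph{all} admissible levels $n$ up to the exponent of $S$, and the exponent of a simple group of Lie type can be comparable to $|S|^{1/r}$ for rank $r$; it is not at all clear that the resulting sum is $o(|S|^2)$ uniformly. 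The invocation of Theorem~\ref{main1} is also misplaced: that theorem produces \emph{one} non-congruence presentation per $S$, which says nothing about the density statement in the conjecture. Finally, the moduli-theoretic paragraph is too vague to constitute an argument; the ``arithmetic Hurwitz data becomes increasingly restrictive'' assertion is precisely the content of the conjecture, not a tool for proving it.

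In short: the paper does not prove this, and your sketch, while pointing in plausible directions, contains real gaps at the centralizer step and the level-summation step that would need substantial new ideas to close.
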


Note that since almost every pair of elements of $S$ generate $S$ \cite{KL90, LS95}, the denominator is approximately $|S|^2$.

\textbf{Organization of the paper.} In section \S\ref{section_topologies} we will phrase the discussion above in the language of profinite ``congruence'' topologies on $\SL_2(\Z)$, which leads to a number of interesting questions. In section \S\ref{section_noncongruence} we will show how Corollary \ref{cor_non-congruence} follows from Theorem \ref{main1}. Finally, in the remaining sections we will prove Theorems \ref{main1} and \ref{main2}.


\section{Congruence topologies and further research directions}\label{section_topologies}

\newcommand{\fF}{\mathfrak{F}}
\newcommand{\fS}{\mathfrak{S}}
\newcommand{\fA}{\mathfrak{A}}
The notions and results described in the introduction can be viewed from the more general perspective of profinite topologies on $\SL_2(\Z)$, suggesting further questions. Let $\fF$ be a formation of finite groups, i.e., a class of finite groups closed under homomorphic images and subdirect product \cite[\S2.1]{RZ}. Important examples are:

\begin{itemize}
\item $\fF_{all}$ -- the class of all finite groups,
\item $\fS_\ell$	-- the finite solvable groups of derived length $\le \ell$, so $\fS_1 = \fA$ is the class of abelian groups and $\fS_2$ the class of metabelian groups,
\item $\fF_p$ -- the class of finite $p$-groups, when $p$ is a prime.
\end{itemize}

The formation $\fF$ defines a topology $T(\fF)$ on the free group $F_2$, by taking the normal subgroups $N\le F_2$ for which $F_2/N\in\fF$ as basis of open neighborhoods of the identity. This topology is preserved by $\Aut(F_2)$ and defines a topology on $\Out^+(F_2)\cong\SL_2(\Z)$, called the $\fF$-congruence topology. Specifically, a basis to this topology is given by the groups $\Gamma_N$ given in \eqref{eq_Gamma_N}:
$$
\Gamma_N := \{\gamma\in \Aut^+(F_2) \;|\;\gamma(N) = N \text{ and $\gamma$ induces the identity on $F_2/N$}\}/\Inn(F_2).
$$
where now $N$ runs over the finite index normal subgroups $N\le F_2$ with $F_2/N\in\fF$. We will call this topology the $\fF$-congruence topology of $\SL_2(\Z)$. Thus the classical congruence topology is the $\fS_1 = \fA$-congruence topology, and Asada's theorem \cite{Asa01} asserts that the $\fF_{all}$-congruence topology is equal to the full profinite topology. Clearly, if $\fF_1\subset\fF_2$, then the $\fF_2$-congruence topology is stronger (or equal) to the $\fF_1$-congruence topology.

The results of Chen-Deligne \cite{CD17} and Ben-Ezra \cite{BE16} mentioned in the introduction mean that the $\fS_1$-congruence topology is equal to the $\fS_2$-congruence topology. However, the computational results mentioned in the introduction show that the $\fS_3$-congruence topology is strictly stronger.

Another way to define the $\fF$-congruence topology of $\SL_2(\Z)$ is the following. Let $F_2^\fF$ be the pro-$\fF$ completion of $F_2$, i.e. the completion of $F_2$ w.r.t. the $\fF$-topology:
$$F_2^\fF := \varprojlim_{N\in\fF} F_2/N.$$
There is a canonical map $F_2\ra F_2^\fF$, which is injective if and only if $F_2$ is residually-$\fF$, i.e., if and only if $\cap_{N\in\fF} N = 1$. Since $\Aut(F_2)$ preserves the $\fF$-topology, this induces a map $\Aut(F_2)\ra\Aut(F_2^\fF)$, and hence also a map $\SL_2(\Z)\cong\Out^+(F_2)\ra\Out(F_2^\fF)$. The $\fF$-congruence topology of $\SL_2(\Z)$ is also the topology induced on $\SL_2(\Z)$ from the profinite group $\Out(F_2^\fF)$.\footnote{The map $\SL_2(\Z)\ra\Out(F_2^\fF)$ is not always injective, in which case the $\fF$-congruence topology of $\SL_2(\Z)$ would not be Hausdorff, but it is so for all the interesting cases. In particular, it is true for all the examples we will discuss.}

This suggests a few interesting problems:
\begin{itemize}
\item[(i)] In the sequence of the $\fS_\ell$-congruence topologies, are there infinitely many jumps? We do not even know if the $\fS_4$-congruence topology is strictly stronger than the $\fS_3$-congruence topology or if they are equal.
\item[(ii)] Let $\fS = \cup_{\ell=1}^\infty \fS_\ell$. Is the $\fS$-congruence topology strictly weaker than the $\fF_{all}$-congruence topology (= the full profinite topology) of $\SL_2(\Z)$?
\item[(iii)] It is easy to see that the $\fF_p$-congruence topology of $\SL_2(\Z)$ is stronger (or equal) to the topology induced from $\SL_2(\Z)\hookrightarrow\SL_2(\Z_p)$. At the same time it is weaker (or equal) to the topology of $\SL_2(\Z)$ induced on it from the full pro-$p$ topology on the congruence subgroup
$$\Gamma(p) := \Ker(\SL_2(\Z)\ra\SL_2(\Z/p))$$
Is it equal to the latter one? A positive answer would be a ``local $p$'' version of Asada's theorem.
\end{itemize}

\section{Theorem \ref{main1} implies Corollary \ref{cor_non-congruence}}\label{section_noncongruence}
In this section we recall (and give a cleaner proof of) the non-congruence criterion of \cite[Theorem 4.4.10]{Chen18}), which shows that coprime generation in the sense of Theorem \ref{main1} implies that the associated presentation is totally non-congruence, thereby proving Corollary \ref{cor_non-congruence}.

For a finite index subgroup $\Gamma\le\SL_2(\Z)$, its \emph{congruence closure}, denoted $\Gamma^c$, is the smallest (classical) congruence subgroup containing $\Gamma$. The discussion in the previous section implies that the (classical) congruence topology on $\SL_2(\Z)$ is induced by the topology on $\SL_2(\widehat{\Z})$. Moreover, if $\overline{\Gamma}$ denotes the closure of $\Gamma$ inside $\SL_2(\widehat{\Z})$, then we have
$$\overline{\Gamma}\cap\SL_2(\Z) = \Gamma^c$$
It follows that $\Gamma$ is congruence if and only if $\Gamma = \Gamma^c$. In the antipodal case, where $\Gamma$ is non-congruence and $\Gamma^c = \SL_2(\Z)$, we will further say that $\Gamma$ is \emph{totally non-congruence}. Thus a finite index $\Gamma\le\SL_2(\Z)$ is totally non-congruence if it is a proper subgroup of $\SL_2(\Z)$ which is dense in $\SL_2(\widehat{\Z})$, c.f. \cite[\S4.4]{Chen18}.

Corollary \ref{cor_non-congruence} immediately follows from the following non-congruence criterion:
\begin{thm}[Coprime generation implies totally non-congruence]\label{thm_non-congruence_criterion_A} Let $F_2$ be a free group of rank 2, with generators $a,b$. Let $G$ be a nontrivial finite group generated by $x,y,z$ satisfying $xyz = 1$. Suppose the orders $|x|,|y|,|z|$ satisfy the following property:
\begin{itemize}
\item[$(*)$] The integers $|x|,|y|,|z|$ are pairwise coprime.
\end{itemize}
Then the $\SL_2(\Z)$-stabilizer $\Gamma_{\varphi_{x,y}} := \Gamma_{\ker\varphi}$ of the surjection $\varphi_{x,y} : F_2\ra G$ sending $a,b\mapsto x,y$ is \emph{totally non-congruence}.
\end{thm}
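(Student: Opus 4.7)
The plan is to verify directly that $\Gamma := \Gamma_{\varphi_{x,y}}$ satisfies both conditions defining totally non-congruence: (i) $\Gamma \neq \SL_2(\Z)$, and (ii) the image of $\Gamma$ in $\SL_2(\Z/N)$ equals $\SL_2(\Z/N)$ for every $N \geq 1$. Setting $a := |x|$, $b := |y|$, $c := |z|$ (pairwise coprime by $(*)$), a standard Goursat argument, using that $\SL_2(\Z/p^e)$ and $\SL_2(\Z/q^f)$ share no common nontrivial quotient for distinct primes $p,q$, reduces (ii) to surjectivity at each prime power $p^e$.

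The main step is to produce three parabolic elements of $\Gamma$, one corresponding to each of $x, y, z$. Let $T = \spmatrix{1}{1}{0}{1}$, $S = \spmatrix{0}{-1}{1}{0}$, $U := STS^{-1}$, and $R := TUT^{-1}$; these generate the parabolic subgroups of $\SL_2(\Z)$ fixing $\infty$, $0$, and $1 \in \P^1(\Q)$ respectively. Under the Nielsen realization $\SL_2(\Z) \cong \Out^+(F_2)$ with action $\gamma \cdot \varphi := \varphi \circ \gamma^{-1}$, a direct computation gives $T \cdot (x,y) = (x, x^{-1}y)$ and $U \cdot (x,y) = (xy, y)$, and hence
\[
T^a \cdot (x,y) = (x,y), \qquad U^b \cdot (x,y) = (x,y), \qquad R^c \cdot (x,y) = TU^c T^{-1}(x,y) = (x,y),
\]
the last identity using $(xy)^c = z^{-c} = 1$. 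Thus $T^a, U^b, R^c \in \Gamma$. Fix a prime power $p^e$: by $(*)$, at most one of $a, b, c$ is divisible by $p$, so at least two of the above elements reduce mod $p^e$ to elements of order exactly $p^e$ and generate the corresponding full parabolic subgroups $P_\infty, P_0, P_1 \leq \SL_2(\Z/p^e)$ at their fixed points. Hence the image of $\Gamma$ in $\SL_2(\Z/p^e)$ contains two of $P_\infty, P_0, P_1$.

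It remains to show that any two of these three parabolic subgroups generate $\SL_2(\Z/p^e)$. For the pairs $\{P_\infty, P_0\}$ and $\{P_\infty, P_1\}$ this is immediate from the classical identity $\SL_2(\Z) = \langle T, U\rangle$ (together with $R = TUT^{-1} \in \langle T, U\rangle$). The pair $\{P_0, P_1\}$ is more subtle, since $\langle U, R\rangle$ is a proper (in fact free of rank $2$) subgroup of $\SL_2(\Z)$; here the plan is to first reduce mod $p$, invoking Dickson's classification of subgroups of $\SL_2(\F_p)$ to conclude that any subgroup containing two distinct Sylow $p$-subgroups is all of $\SL_2(\F_p)$, and then lift to $\SL_2(\Z/p^e)$ by induction on $e$ using that the congruence kernel is the adjoint representation of $\SL_2(\F_p)$ on $\mathfrak{sl}_2(\F_p)$ (irreducible for $p \geq 3$; $p = 2$ is handled by direct check) together with the fact that $U$ has order exactly $p^e$ in $\SL_2(\Z/p^e)$, forcing the induction not to collapse the kernel.

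For (i), if $\Gamma = \SL_2(\Z)$ then $S \in \Gamma$, so $[(y^{-1}, x)] = S \cdot [(x,y)] = [(x,y)]$ in $\Epi(F_2, G)/\Inn(G)$; this yields $g \in G$ with $gxg^{-1} = y^{-1}$, forcing $|x| = |y|$. Combined with $\gcd(|x|, |y|) = 1$ from $(*)$, this gives $x = y = 1$ and hence $G = 1$, contradicting the nontriviality of $G$. The chief technical obstacle is the generation lemma for $\{P_0, P_1\}$, where neither parabolic is $\langle T\rangle$, so one cannot appeal directly to $\SL_2(\Z) = \langle T, U\rangle$; both Dickson's theorem mod $p$ and the module-theoretic lifting to $\SL_2(\Z/p^e)$ are genuinely needed.
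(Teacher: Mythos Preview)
Your argument is correct, but it takes a more laborious path than the paper's. Both proofs begin the same way: exhibit unipotent elements in $\Gamma$ and then reduce, via the Goursat/no-common-quotient argument for the factors $\SL_2(\Z/p_j^{r_j})$, to proving surjectivity at each prime power. The difference is in how that local surjectivity is obtained.

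The paper does not stay inside $\Gamma_{\varphi_{x,y}}$. Instead it observes that the three presentations $\varphi_{x,y},\ \varphi_{z,x},\ \varphi_{y,z}$ lie in a single $\SL_2(\Z)$-orbit, so their stabilizers $\Gamma_1,\Gamma_2,\Gamma_3$ are conjugate. Each $\Gamma_i$ contains the \emph{standard} upper and lower unipotents $\left[\begin{smallmatrix}1&*\\0&1\end{smallmatrix}\right]$, $\left[\begin{smallmatrix}1&0\\ *&1\end{smallmatrix}\right]$ with exponents taken from one of the pairs $(|x|,|y|)$, $(|z|,|x|)$, $(|y|,|z|)$. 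For a given prime $p$, at least one of these pairs is entirely prime to $p$, so the corresponding $\Gamma_i$ surjects onto $\SL_2(\Z/p^e)$ by the elementary fact $\langle T,U\rangle = \SL_2$; conjugacy then transports this to all three $\Gamma_i$. No case analysis on which cusp is missing, no Dickson, no lifting.

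Your approach instead locates a \emph{third} parabolic $R^c$ (fixing the cusp $1$) inside the single group $\Gamma$, which is a nice observation, but it forces you into the case $\{P_0,P_1\}$ when $p\mid |x|$. Your Dickson-plus-adjoint-lifting argument there is valid, though it is heavier than necessary: since the pair of cusps $\{0,1\}$ is carried to $\{\infty,0\}$ by $\left[\begin{smallmatrix}1&-1\\1&0\end{smallmatrix}\right]\in\SL_2(\Z)$, the subgroup $\langle P_0,P_1\rangle$ is already conjugate in $\SL_2(\Z/p^e)$ to $\langle P_\infty,P_0\rangle = \SL_2(\Z/p^e)$, and you are done without any classification or module theory. (This conjugation is, in essence, the paper's trick viewed at the level of parabolics rather than stabilizers.) Your properness argument via $S$ is fine and close in spirit to the paper's, which instead notes that the cyclically permuted presentations cannot all be $\Inn(G)$-equivalent when $|x|,|y|,|z|$ are pairwise coprime and $G\neq 1$.
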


Before giving the proof, we make some remarks:
\begin{itemize}
\item The coprimality condition $(*)$ is equivalent to the statement that the gcd $\delta$ of the set $\{|x||y|,|x||z|,|y||z|\}$ is 1. If $G$ is abelian, then all three pairwise least common multiples of $|x|,|y|,|z|$ are equal to the exponent $e(G)$, and hence the gcd $\delta$ would be divisible by $e(G)$ in this case. Thus $\delta$ can be viewed as a measure of non-abelianness of $G$ relative to the generating triple $(x,y,z)$, and the theorem links this non-abelianness to the non-congruenceness of the stabilizer $\Gamma_{\varphi_{x,y}}$.

\item As will be evident from the proof, the theorem has some room for flexibility. If the pairwise gcd's in $(*)$ are not too large compared to the index of $\Gamma_{\varphi_{x,y}}$, then we can still show that $\Gamma_{\varphi_{x,y}}$ is non-congruence, though maybe not totally non-congruence.
\end{itemize}

\begin{proof} Define $\varphi_{z,x}$ and $\varphi_{y,z}$ analogously to $\varphi_{x,y}$. We note that $\varphi_{x,y},\varphi_{z,x},\varphi_{y,z}$ lie in the same $\SL_2(\Z)\cong\Out^+(F_2)$-orbit, and hence their stabilizers $\Gamma_{\varphi_{x,y}},\Gamma_{\varphi_{z,x}},\Gamma_{\varphi_{y,z}}$ are conjugate. Further observe that
\begin{equation}\label{eq_parabolic}
\spmatrix{1}{|x|}{0}{1},\spmatrix{1}{0}{|y|}{1}\in\Gamma_{\varphi_{x,y}},\quad\spmatrix{1}{|z|}{0}{1},\spmatrix{1}{0}{|x|}{1}\in\Gamma_{\varphi_{z,x}},\quad\text{and}\quad \spmatrix{1}{|y|}{0}{1},\spmatrix{1}{0}{|z|}{1}\in\Gamma_{\varphi_{y,z}}
\end{equation}

For ease of notation, let $\Gamma_1,\Gamma_2,\Gamma_3$ be these three subgroups. Since $G$ is nontrivial, condition $(*)$ implies that $x,y,z$ are not all conjugate to each other, and hence the $\Gamma_i$'s are proper subgroups of $\SL_2(\Z)$. It remains to show that each $\Gamma_i$ is dense in $\SL_2(\widehat{\Z})$. For this, fix an integer $n$ with prime factorization $n = \prod_j p_j^{r_j}$, and consider the direct product decomposition
\begin{equation}\label{eq_direct_sum_decomposition}
\SL_2(\Z/n)\cong \prod_{j}\SL_2(\Z/p_j^{r_j})	
\end{equation}
The coprimality assumption $(*)$ implies that for each $j$, at least two of $|x|,|y|,|z|$ are coprime to $p_j$, and hence at least one of $\Gamma_1,\Gamma_2,\Gamma_3$ projects onto $\SL_2(\Z/p_j^{r_j})$. Since the $\Gamma_i$'s are conjugate, each $\Gamma_i$ projects onto $\SL_2(\Z/p_j^{r_j})$ for every $j$, so the image of each $\Gamma_i$ in the product \eqref{eq_direct_sum_decomposition} is a subdirect product. Since the direct factors have no nontrivial common quotients, this subdirect product is the entirety of $\SL_2(\Z/n)$, and hence each $\Gamma_i$ surjects onto $\SL_2(\Z/n)$. Since $n$ was arbitrary, this shows that each $\Gamma_i$ is dense in $\SL_2(\widehat{\Z})$, as desired.
\end{proof}

\section{Coprime generation of finite simple groups}\label{section_coprime}


In this section we first outline our strategy to prove Theorems \ref{main1} and \ref{main2}.
In fact, for many of the groups $S$ we will show that the elements $u,v$ desired in Theorem \ref{main1} can be chosen so that $|u|$, $|v|$, $|w|$ are (pairwise) primes. 
Also, to prove Theorem \ref{main1} for a given $S$, it suffices to prove that the conclusion of Theorem \ref{main1} holds for
some quasisimple group $G$ with $G/\ZB(G) \cong S$. In most of the cases, we will show that $G = \langle x,y \rangle$ so that $u=x\ZB(G)$ and 
$v = y\ZB(G)$ satisfy the conclusion of Theorem \ref{main1}; moreover, the orders of $x$, $y$, and $xy$ are all coprime to $|\ZB(G)|$, and hence $x$ and $y$ 
satisfy the conclusion of Theorem \ref{main2}. 

\smallskip
We will employ a case-by-case approach to prove Theorems \ref{main1} and \ref{main2}, relying on the Classification of Finite Simple Groups \cite{GLS}.
We work with some quasisimple cover $G$ of $S$, with $\ZB(G) \neq 1$ if $\mathrm{Mult}(S) \neq 1$, and aim to find two elements $x,y \in G$ such that 
\begin{enumerate}[\rm(A)]
\item {\it $r=|x|$ and $s=|y|$ are coprime} (in fact primes if possible). This condition ensures that the elements $u:=x\ZB(G)$ and $v:=y\ZB(G)$ of 
$S=G/\ZB(G)$ have coprime orders. 
\item {\it $x^G \cdot y^G$ contains every non-central element of $G$.} This condition implies that $u^S \cdot v^S$ contains every nontrivial element of 
$S$. To ensure this condition, by Frobenius' character formula, it suffices to show that 
\begin{equation}\label{sum1}
  \Bigl|\sum_{1_G \neq \chi \in \Irr(G)}\frac{\chi(x)\chi(y)\chi(z)}{\chi(1)}\Bigr| < 1
\end{equation}  
for any $z \in G \smallsetminus \ZB(G)$.
Now, if $r$ and $s$ are primes, then by Burnside's $p^aq^b$-theorem, $S$ contains an element $w$ of prime order $t \neq r,s$, and without loss of generality 
we may assume that $w=uv$. In general, we aim to show that {\it we can find some prime divisor $t \nmid rs|\ZB(G)|$ of $|S|$}, and hence an element 
$z \in G$ of order $t$ for which we may again assume that $z=xy$. 
\item {\it No maximal subgroup of $S$ can have order divisible by $D:=\mathrm{lcm}(|u|,|v|,t)$.} By our construction of $u,v$ in (A) and (B), this implies that 
$S=\langle u,v \rangle$, as desired in Theorem \ref{main1}. 
\end{enumerate}

Recall \cite{Zs} that if $a,d \in \Z_{\geq 2}$ then $a^d-1$ admits a {\it primitive prime divisor $\ell$}, i.e. a prime $\ell$ that divides 
$a^d-1$ but not $\prod^{d-1}_{i=1}(a^i-1)$, unless $(a,d) = (2,6)$ or $d=2$ and $a+1$ is a $2$-power. Any such prime divisor $\ell$ satisfies 
$\ell \geq d+1$. Among all primitive prime divisors for the given pair $(a,d)$, we will write $\ell=\ppd(a,d)$ to denote the largest one among them.

\medskip
The main bulk of the work is
to prove Theorem \ref{main1} for $S=\AAA_n$ or $S=G/\ZB(G)$, $G$ a quasisimple group of Lie type defined over a field $\F_q$ in characteristic 
$p$, so $q=p^f$ with $f \geq 1$.  With $q$ fixed, we will denote by $\Phi_m=\Phi_m(q)$, 
the value of the $m^{\mathrm{th}}$ cyclotomic polynomial evaluated at $q$. 

Most of the times, we will choose $x, y \in G$ to be suitable regular semisimple elements, and use \eqref{sum1} if necessary to show that (B) holds.
To prove (C), in particular for classical groups, first we make use of \cite{BHR} to rule out all maximal subgroups of $S$ if $S$ has low rank. 
For the remaining classical groups, we will use results of \cite{GPPS} to narrow down the list of maximal subgroups of $S$ that may have order divisible by
$D$.  For exceptional groups of Lie type, we will use results of \cite{LM} and the following lemma.

\begin{lem}\label{hurwitz}
Suppose that the simple group $S$ is a Hurwitz group, i.e. 
$$S = \langle x,y \mid x^2=y^3=(xy)^7 = 1 \rangle.$$ 
Then Theorem \ref{main1} holds for $S$.
In particular, Theorem \ref{main1} holds for $\AAA_n$ with $n \geq 168$ and for the following simple groups of Lie type:
\begin{enumerate}[\rm(i)]
\item $\tw2 G_2(q)$, $q = 3^{2a+1} \geq 27$.
\item $G_2(q)$, $q \geq 5$.
\item $\tw3 D_4(q)$, where $q=p^f \neq 4$ and $p \neq 3$.
\end{enumerate}
\end{lem}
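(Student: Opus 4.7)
The plan is essentially a two-line reduction followed by citation of the known classification of Hurwitz groups. First, suppose $S$ is Hurwitz, so $S=\langle x,y\rangle$ with $|x|=2$, $|y|=3$, $|xy|=7$. The integers $2$, $3$, $7$ are pairwise coprime, so taking $u=x$ and $v=y$ gives a pair of generators satisfying the conclusion of Theorem \ref{main1} with $|u|=2$, $|v|=3$, $|uv|=7$. This is the entire content of the first assertion; there is nothing further to verify since the Hurwitz presentation itself guarantees both generation and the required orders.

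For the list of explicit families, the plan is simply to invoke the known membership of these groups in the class of Hurwitz groups. The alternating groups $\AAA_n$ with $n\geq 168$ are Hurwitz by Conder's theorem (in fact $\AAA_n$ is Hurwitz for all but finitely many small $n$). The three Lie-type families in (i)--(iii) are Hurwitz by work of Malle: the groups $G_2(q)$ for $q\geq 5$ and the Steinberg triality groups $\tw3 D_4(q)$ in the stated range, along with the Ree groups $\tw2 G_2(q)$ for $q=3^{2a+1}\geq 27$, were all shown to be $(2,3,7)$-generated. Thus the same $u=x$, $v=y$ choice as above applies.

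The only step that could be viewed as an obstacle is verifying that the cited sources indeed cover precisely the ranges stated in (i)--(iii)---in particular, that the exclusions $q\neq 4$ and $p\neq 3$ for $\tw3 D_4(q)$, the lower bound $q\geq 5$ for $G_2(q)$, and the bound $q\geq 27$ for $\tw2 G_2(q)$ match the Hurwitz-generation results in the literature. Since these families have been treated explicitly and the excluded small cases correspond precisely to the non-Hurwitz exceptions identified by Malle (e.g., $G_2(2)'\cong \PSU_3(3)$, $G_2(3)$, $G_2(4)$, and $\tw3 D_4(2)$), the match is clean, and no further computation is required beyond recording the references. Accordingly, the proof reduces to: (a) the one-line observation that $(2,3,7)$ is pairwise coprime, and (b) quoting the Hurwitz-generation results for the stated simple groups.
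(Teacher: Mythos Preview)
Your proposal is correct and follows essentially the same approach as the paper's proof: the paper likewise dismisses the first statement as obvious (since $2$, $3$, $7$ are pairwise coprime), cites Conder for the alternating groups, and cites Malle's papers for the three Lie-type families in (i)--(iii). One small slip: your parenthetical list of excluded cases mentions $\tw3 D_4(2)$, but the lemma's hypothesis excludes $q=4$ and $p=3$, not $q=2$; this does not affect the argument itself.
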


\begin{proof}
The first statement is obvious. The alternating case follows from \cite[Corollary]{Con}. Next, cases (i) and (ii) follow from the main result of \cite{M1}, and (iii) follows from \cite{M2}.
\end{proof}

There are further results in literature showing that certain finite  groups of Lie type are Hurwitz. However, these results usually 
assume the simple, say classical, group in question has very large rank, or specific fields of definition. Since we aim to prove Theorem \ref{main1} for {\bf all} simple classical groups as well as a variation of Theorem \ref{main1} in the case $S$ has nontrivial Schur multiplier, we will follow our main outline instead.
More precisely, we will follow the outline above to prove Theorems \ref{main1} and \ref{main2} for all but a finite number of exceptional cases. These remaining cases will be individually verified using \cite{GAP} via randomized search over generating pairs, or deduced as a corollary of existing results. Due to the complexity and variety of the groups involved, we will for the most part limit our exposition to general remarks and will avoid writing down explicit generators.

\section{Proof of Theorems \ref{main1} and \ref{main2}: Alternating groups}

\begin{lem}\label{2-trans}
Let $n \geq 5$ be odd and  $X < \SSS_n$ a transitive subgroup that contains an $(n-2)$-cycle $g$. Then $X$ is $3$-transitive.
\end{lem}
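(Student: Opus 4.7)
The plan is to break the problem into two steps: first show $X$ is $2$-transitive, and then use a conjugation trick (exploiting that the $(n-2)$-cycle already fixes two points) to boost $2$-transitivity to $3$-transitivity. The oddness of $n$ will only be used in the first step.

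For the first step, let $g$ be the given $(n-2)$-cycle, and let $\{\alpha,\beta\}$ be its two fixed points. Since $g \in X_\alpha$ acts on $\{1,\ldots,n\}\setminus\{\alpha\}$ with orbits $\{\beta\}$ and the support of $g$, the group $X_\alpha$ has at most two orbits on $\{1,\ldots,n\}\setminus\{\alpha\}$. If there is only one orbit then $X$ is $2$-transitive and we are done. Otherwise, $X_\alpha$ must fix $\beta$, so $X_\alpha \leq X_\beta$, and comparing indices via transitivity of $X$ gives $X_\alpha = X_\beta$. I would then argue that $\beta$ is the unique point other than $\alpha$ fixed by $X_\alpha$: any further fixed point $\beta'$ would force $X_\alpha = X_{\beta'}$, contradicting the fact that $g\in X_\alpha$ fixes only the two points $\alpha,\beta$. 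This makes the assignment $\alpha \mapsto \beta$ a well-defined $X$-equivariant fixed-point-free involution on $\{1,\ldots,n\}$, which is impossible since $n$ is odd. Hence the second case does not occur and $X$ is $2$-transitive.

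For the second step, fix any two distinct points $\alpha,\gamma \in \{1,\ldots,n\}$. By $2$-transitivity, there exists $h \in X$ with $h(\alpha) = \alpha$ and $h(\beta) = \gamma$. Then $hgh^{-1} \in X_{\alpha,\gamma}$ is an $(n-2)$-cycle on the remaining $n-2$ points, so $X_{\alpha,\gamma}$ acts transitively on $\{1,\ldots,n\}\setminus\{\alpha,\gamma\}$. This is exactly the statement that $X$ is $3$-transitive.

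The main (and essentially the only) obstacle is the $2$-transitivity step, since $3$-transitivity then follows almost formally from the presence of a cycle fixing exactly two points. The key point there is the parity argument: without the hypothesis that $n$ is odd, one can indeed have a transitive but imprimitive group with a system of blocks of size $2$ preserved by such a cycle, so the odd hypothesis is essential rather than cosmetic.
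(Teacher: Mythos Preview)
Your proof is correct and follows the same overall strategy as the paper: reduce to showing $2$-transitivity, note that failure of $2$-transitivity forces $X_\alpha = X_\beta$, and extract a parity contradiction from $n$ being odd. The organization of the parity step differs slightly. The paper first proves $X$ is primitive (a block containing a fixed point of $g$ would be $g$-invariant, have odd size $t\geq 3$, and since $t-1<n-2$ be fixed pointwise by $g$, giving $g$ too many fixed points); it then observes that the relation $i\sim j \Leftrightarrow X_i = X_j$ yields a nontrivial block system, contradicting primitivity. Your fixed-point-free involution is precisely this block system read as a pairing of size-$2$ blocks, so you reach the contradiction in one step rather than two. Your route is a bit more economical; the paper's route establishes primitivity along the way, but does not use that fact elsewhere. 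One cosmetic point: in your uniqueness argument for $\beta$, the detour through ``$X_\alpha = X_{\beta'}$'' is unnecessary---any fixed point $\beta'$ of $X_\alpha$ is already a fixed point of $g\in X_\alpha$, hence lies in $\{\alpha,\beta\}$.
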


\begin{proof}
By assumption, $X$ is transitive on $\Omega:=\{1,2, \ldots,n\}$, and $g$ fixes, say $1$ and $2$.
 If $X$ is imprimitive, then $g$ fixes the imprimitivity block $\Delta_1$ of $\Omega$ that contains $1$. Note that $t:=|\Delta_1|$ is odd and $1 < t < n$. Since $2 \leq t-1 < n-2$ and $g$ acts on $\Delta_1 \setminus \{1\}$, $g$ fixes every point in $\Delta_1 \setminus \{1\}$. Thus $g$ has $t \geq 3$ fixed points, a contradiction. 
 
 Now assume that $\Stab_X(1)$ is not transitive on $\Omega \setminus \{1\}$. Then, since $g\in \Stab_X(1)$ acts transitively on $\Omega \setminus \{1,2\}$
 and fixes $2$, $\Stab_X(1)$ fixes $2$ and hence $\Stab_X(1) \leq \Stab_X(2)$. By order comparison, we have  $\Stab_X(1) = \Stab_X(2)$. Now 
 for any two points $i \neq j$ in $\Omega$, we say that $x \sim y$ if and only if $\Stab_X(i) = \Stab_X(j)$. Then $\sim$ is an equivalence relation on $\Omega$ which is $X$-equivariant and nontrivial. The equivalence classes of $\sim$ give rise to a nontrivial imprimitive system on $\Omega$ preserved by $X$, and thus
 $X$ is imprimitive, a contradiction.  
 
 Thus $X$ is $2$-transitive. But it contains the $(n-2)$-cycle $g$, so it is $3$-transitive.
\end{proof}

Suppose $S=\AAA_n$ with $2 \nmid n \geq 5$. Then we choose $u \in S$ to be an $n$-cycle and $v \in S$ to be an $(n-2)$-cycle. Say $v$ fixes 
$1$ and $2$ while acting on $\Omega = \{1,2,\ldots,n\}$. By \cite[Theorem 7]{HKL},
$u^{\SSS_n}v^{\SSS_n} = S \setminus \{1\}$. Hence, conjugating $u$ and $v$ in $\SSS_n$ suitably, we may assume that $uv$ is a double transposition when 
$n =5$, and an $(n-4)$-cycle when $n \geq 7$. 
We claim that $H:= \langle u,v \rangle$ equals $S$. Indeed, when $n=5$, $H \leq \AAA_5$ has order divisible by $30$, and so $H=\AAA_5$.
Suppose $n \geq 7$. Then $H \ni u$ is transitive on $\Omega:=\{1,2, \ldots,n\}$. By Lemma \ref{2-trans} applied to 
$H \ni v$, $H$ is doubly transitive. Now the pointwise stabilizer $\Stab_H(1,2)$ contains $v$, so it is transitive.
Since $H$ is doubly transitive, we may assume that some $H$-conjugate $t$ of $uv$ fixes $1$ and $2$. Thus $t \in \Stab_H(1,2)$ and $t$ acts 
as an $(n-4)$-cycle on $\Omega \setminus \{1,2\}$. It follows from Lemma \ref{2-trans} that $\Stab_H(1,2)$ is doubly transitive on  
$\Omega \setminus \{1,2\}$, and so $H$ is $4$-transitive on $\Omega$. A well-known consequence of the classification of finite simple groups is 
that the only proper $4$-transitive permutation subgroups of $\AAA_n$, cf. \cite[Theorem 5.3]{Cam}, are Mathieu groups in their natural actions.
Thus, if $H < \AAA_n$, then $(n,H) = (11,M_{11})$ or $(23,M_{23})$, and in both cases $n-4$ does not divide $|H|$, a contradiction. 

This proves Theorem \ref{main1} for $S=\AAA_n$ with $2 \nmid n \geq 5$. To prove Theorem \ref{main2} for $2\nmid n \geq 19$, 
we will work with $G=2\SSS_n$, and 
consider $x \in G$ an inverse image of odd order of an $n$-cycle and $y \in G$ an inverse image of odd order of an $(n-2)$-cycle. Taking $z \in G$ an
inverse image of odd order of an $(n-4)$-cycle, we claim that $z \in x^Gy^G$. We will use \eqref{sum1}, but dividing the 
sum into characters of $G$ which are non-faithful, respectively faithful. Note that for any $g \in G$ we have
$$\sum_{\chi \in \Irr(G/\ZB(G))}|\chi(g)|^2 = |\CB_{G/\ZB(G)}(g\ZB(G))|,~~\sum_{\chi \in \Irr(G) \setminus \Irr(G/\ZB(G))}|\chi(g)|^2 = 
    |\CB_G(g)|-|\CB_{G/\ZB(G)}(g\ZB(G))|.$$
By \cite[Corrollary 3.1.2]{LST11}, there are exactly $n$  characters $\chi \in \Irr(G/\ZB(G))$ which are nontrivial at $x$, namely the ones labeled by 
hook partitions, all taking values $\pm 1$ at $x$. Furthermore, among these characters, 
the two labeled by $(n-1,1) \vdash n$ and $(2,1^{n-2}) \vdash n$ vanish at $z$, and all
other ones have degree $1$ or $\geq (n-1)(n-2)/2$. It follows 
from the Cauchy--Schwarz inequality that
$$\Bigl| \sum_{\chi \in \Irr(G/\ZB(G)),~\chi(1) > 1} \frac{\chi(x)\chi(y)\overline{\chi(z)}}{\chi(1)}\Bigr| \leq \frac{2\sqrt{48(n-2)(n-4)}}{(n-1)(n-2)}.$$ 
For the faithful, i.e. spin, characters, we have $|\chi(x)| \leq \sqrt{n}$ and $|\chi(1)| \geq 2^{(n-1)/2}$, so again by Cauchy--Schwarz,
$$\Bigl| \sum_{\chi \in \Irr(G) \setminus \Irr(G/\ZB(G))} \frac{\chi(x)\chi(y)\overline{\chi(z)}}{\chi(1)}\Bigr| \leq \frac{\sqrt{48n(n-2)(n-4)}}{2^{(n-1)/2}}.$$ 
Taking $n \geq 19$ we have
$$\Bigl| \sum_{\chi \in \Irr(G),~\chi(1) > 1} \frac{\chi(x)\chi(y)\overline{\chi(z)}}{\chi(1)}\Bigr| < 2.$$
It follows from \eqref{sum1} that $z \in x^Gy^G$. Conjugating $x$ and $y$ suitably, we may assume $z = xy$. Since we have shown above that $S = \langle x\ZB(G),y\ZB(G) \rangle$, we are done.

\smallskip
Suppose $S=\AAA_n$ with $2 \mid n \geq 6$. Then we choose $u \in S$ to be an $(n-1)$-cycle and $v \in S$ to be an $(n-3)$-cycle. By \cite[Theorem 7]{HKL},
$u^{\SSS_n}v^{\SSS_n} = S \setminus \{1\}$. Hence, conjugating $u$ and $v$ in $\SSS_n$ suitably, we may assume that $uv$ is a disjoint product 
\begin{enumerate}[\rm(i)]
\item of a $5$-cycle and an $(n-5)$-cycle when $n \equiv 2,4 \pmod{5}$ (this ensures that $5$ is coprime to $(n-1)(n-3)(n-5)$, and so 
$|u|$, $|v|$, $|uv|$ are pairwise coprime), and 
\item of a $2$-cycle and an $(n-2)$-cycle when $n \equiv 0,1,3 \pmod{5}$ (clearly, $|u|$, $|v|$, $|uv|$ are pairwise coprime by this choice).
\end{enumerate}
We claim that $H:= \langle u,v \rangle$ equals $S$. Indeed, if $H \ni u$ is intransitive on $\Omega=\{1,2, \ldots,n\}$,
then $H$ has to fix the only fixed point, say $1$ of $u$. But $uv \in H$ has no fixed point, a contradiction. So $H$ is transitive and contains the $(n-1)$-cycle 
$u$, and so $\Stab_H(1)$ is transitive on $\Omega \setminus \{1\}$.  
We may assume that some $H$-conjugate $v'$ of $v$ fixes $1$. Thus $v' \in \Stab_H(1)$ and $v'$ acts 
as an $(n-3)$-cycle on $\Omega \setminus \{1\}$. It follows from Lemma \ref{2-trans} that $\Stab_H(1)$ is doubly transitive on  
$\Omega \setminus \{1\}$, and so $H$ is $3$-transitive on $\Omega$. Suppose $n = 6$. Then $|H|$ is divisible by $\mathrm{lcm}(3,4,5)=60$. If 
$H \neq \AAA_6$, then it follows from \cite{Atlas} that $H \cong \AAA_5$, but then $H$ contains no element of order $4$, a contradiction.
So we may assume that $n \geq 8$ and $v \in \Stab_H(1,2,3)$, and 
now $\Stab_H(1,2,3)$ is transitive on $\Omega \setminus \{1,2,3\}$, which means that $H$ is $4$-transitive. 
Again using \cite[Theorem 5.3]{Cam}, we see that either
$H = \AAA_n$, or $(n,H)=(12,M_{22})$, $(24,M_{24})$. But $n-5$ does not divide $|H|$ in the two latter cases, so $H = \AAA_n$.

This proves Theorem \ref{main1} for $S=\AAA_n$ with $2 \mid n \geq 6$. To prove Theorem \ref{main2} for $2 \mid n \geq 20$, 
we will again work with $G=2\SSS_n$, and 
consider $x \in G$ an inverse image of odd order of an $(n-1)$-cycle and $y \in G$ an inverse image of odd order of an $(n-3)$-cycle. Taking $z \in G$ an
inverse image of odd order of the disjoint product of a $5$-cycle and an $(n-5)$-cycle, we claim that $z \in x^Gy^G$. As in the odd-$n$ case, we will use \eqref{sum1}, but dividing the sum into characters of $G$ which are non-faithful, respectively faithful. By \cite[Corrollary 3.1.2]{LST11}, there are exactly $n-1$  characters $\chi \in \Irr(G/\ZB(G))$ which are nontrivial at $x$, all taking values $\pm 1$ at $x$ and having degree $1$ or $\geq n(n-3)/2$. It follows 
from the Cauchy--Schwarz inequality that
$$\Bigl| \sum_{\chi \in \Irr(G/\ZB(G)),~\chi(1) > 1} \frac{\chi(x)\chi(y)\overline{\chi(z)}}{\chi(1)}\Bigr| \leq \frac{2\sqrt{30(n-3)(n-5)}}{n(n-3)}.$$ 
For the faithful, i.e. spin, characters, we have $|\chi(x)| \leq \sqrt{n-1}$ and $|\chi(1)| \geq 2^{(n-2)/2}$, so again by Cauchy--Schwarz,
$$\Bigl| \sum_{\chi \in \Irr(G) \setminus \Irr(G/\ZB(G))} \frac{\chi(x)\chi(y)\overline{\chi(z)}}{\chi(1)}\Bigr| \leq \frac{\sqrt{30(n-1)(n-3)(n-5)}}{2^{(n-2)/2}}.$$ 
Taking $n \geq 20$ we have
$$\Bigl| \sum_{\chi \in \Irr(G),~\chi(1) > 1} \frac{\chi(x)\chi(y)\overline{\chi(z)}}{\chi(1)}\Bigr| < 2.$$
It follows from \eqref{sum1} that $z \in x^Gy^G$. Conjugating $x$ and $y$ suitably, we may assume $z = xy$. Since we have shown above that $S = \langle x\ZB(G),y\ZB(G) \rangle$, we are done.

Thus we have proved Theorem \ref{main1} for $\AAA_n$ with $n \geq 5$, and Theorem \ref{main2} for $\AAA_n$ with $n \geq 19$.

\smallskip
Finally, we check Theorem \ref{main2} for the cases $\AAA_n$ with $5\le n\le 18$. For all such $n$, we checked using \cite{GAP} that the perfect central double cover of $\AAA_n$ admits a generating pair $x,y$ such that $|x|,|y|,|xy|$ are all odd. For $n\ne 6$, these generating pairs are Nielsen equivalent\footnote{Recall that two generating pairs $(x,y),(x',y')$ are \emph{Nielsen equivalent} if $(x',y')$ can be produced from $(x,y)$ via a sequence of moves of the form $(x,y)\mapsto (x,xy)$, $(x,y)\mapsto (xy,y)$, and $(x,y)\mapsto (x,y^{-1})$.} to the pair given by the \cite{GAP} command
$$\texttt{GeneratorsOfGroup(DoubleCoverOfAlternatingGroup(n));}$$
where one should replace \texttt{n} with the appropriate degree. For $n = 6$, the claimed generating pair is Nielsen equivalent to the pair given by
$$\texttt{GeneratorsOfGroup(AtlasGroup("2.A6"));}$$

\section{Proof of Theorems \ref{main1}  and \ref{main2}: Exceptional groups of Lie type}

We begin with the case $G=S=\tw 2 B_2(q)$, $q=2^{2a+1} \geq 8$. Then we choose $|x|=q+\sqrt{2q}+1$ and $|y|=q-1$. Checking the character table 
of $G$ \cite{Bur}, one sees that the only nontrivial irreducible character of $G$ that is non-vanishing at both $x$ and $y$ is the Steinberg character 
$\St$, of degree $q^2$. Since $|\St(x)|=|\St(y)|=1$ but $|\St(z)| < |\St(1)$ for any $1 \neq z \in G$, \eqref{sum1} holds. We can now choose $|w|=q-\sqrt{2q}+1$ to
fulfill (B). Now $D=(q^2+1)(q-1)$, and so (C) holds by \cite[Table 8.16]{BHR}.

\smallskip
Next we complete the case $G=S=G_2(q)$ with $q>2$. Suppose $q=3$. Using \eqref{sum1} and \cite{Atlas} one can check 
that  $u^Sv^S = S \setminus \{1\}$ for $u \in S$ of order $7$ and $v \in S$ of order $8$. In particular, we may assume that $w=uv$ has order
$13$. None of the maximal subgroups of $S$ can contain elements for all of the orders $7$, $8$, and $13$ \cite{Atlas}, whence $S=\langle u,v \rangle$.
Suppose $q=4$. Using \eqref{sum1} and \cite{Atlas} one can check 
that  $u^Sv^S = S \setminus \{1\}$ for $u \in S$ of order $13$ and $v \in S$ of order $15$. In particular, we may assume that $w=uv$ has order
$7$. None of the maximal subgroups of $S$ can have order divisible by $7 \cdot 13 \cdot 15$ \cite{Atlas}, whence $S=\langle u,v \rangle$.
The case $q \geq 5$ is already done in Lemma \ref{hurwitz}(ii).

\smallskip
Suppose $G=S=\tw3 D_4(q)$. By Lemma \ref{hurwitz}(iii), it suffices to consider the case $p>2$ or $q=4$. Then we choose $x \in G$ an element of order 
$\Phi_{12}$ (belonging to class $s_{14}$ in the notation of \cite{DMi}), and $y \in G$ of order $\Phi_1\Phi_2\Phi_6$. Among the irreducible characters of 
$G$, only five are not of $\ell_1$-defect $0$, where $\ell_1 = \ppd(p,12f)$. So only these five characters can be non-vanishing at $x$, and two of them
are $1_G$ and the Steinberg character $\St$. Two of the three remaining ones have $\ell_2$-defect $0$, where $\ell_2=\ppd(p,6f)$, and hence vanish 
at $y$. The fifth one, $\tw3 D_4[-1]$ in the notation of \cite{Sp}, also vanishes at $y$. Arguing as above, we see that \eqref{sum1} holds for any
$1 \neq z \in G$, and hence $x^Gy^G \supseteq G \setminus \{1\}$. Thus $B$ holds, and hence we can choose $x$ and $y$ in such a way that
$|xy|=p$; in particular, $|\langle x,y \rangle|$ has order divisible by $p\Phi_1\Phi_2\Phi_6\Phi_{12}$. Inspecting the list of maximal subgroups of
$G$ \cite[Table 8.51]{BHR}, we see that $G=\langle x,y \rangle$.

\smallskip
Suppose $G = S= \tw2 F_4(q)$ with $q>2$. By \cite[Lemma 2.13]{GT}, $G$ admits elements $x$ of order $\ppd(p,12f)$ and $y$ of order 
$\ppd(p,6f)$ such that $x^Gy^G = G \setminus \{1\}$. In particular, we may assume that $|xy|= \ppd(p,4f)$. Checking the list of maximal subgroups of
$G$ \cite{M3}, we conclude that $G=\langle x,y \rangle$. Next suppose that $S=\tw2 F_4(2)'$. Using \eqref{sum1} and \cite{Atlas} one can check 
that  $u^Sv^S = S \setminus \{1\}$ for $u \in S$ of order $13$ and $v \in S$ of order $10$. In particular, we may assume that $w=uv$ has order
$3$. None of the maximal subgroups of $S$ can contain elements for all of the orders $13$, $10$, and $3$ \cite{Atlas}, whence $S=\langle u,v \rangle$.

\smallskip
Suppose $G=S=F_4(q)$ with $q>2$. Then the proof of \cite[Theorem 3.1]{LM} shows that $G = \langle x,y \rangle$ for some $x$ of order $\Phi_{12}$ and 
$y$ of order $2$, with $|xy|=3$. In the case $G=2 \cdot F_4(2)$, we can choose $x \in G$ of class $17a$ and $y \in G$ of class $13a$, and $z \in G$ of
class $15a$, in the notation of \cite{GAP}. Then we can check using \cite{GAP} that $z \in x^Gy^G$, and so, conjugating $x$ and $y$ suitably, we 
may assume $z=xy$. According to \cite{Atlas}, no maximal subgroup of $S=G/\ZB(G)$ can have order divisible by $13 \cdot 17$, whence
$G = \langle x,y \rangle$, and thus we have proved Theorems \ref{main1} and \ref{main2} for $S = F_4(2)$. 

\smallskip
Suppose $G=E_6(q)_{\mathrm {sc}}$, so that $S=G/\ZB(G)$ with $|\ZB(G)=\gcd(3,q-1)$. Then the proof of \cite[Theorem 4.1]{LM} shows that $G = \langle x,y \rangle$ for some $x$ of order $\Phi_{9}$ and $y$ of order $2$, with $|xy|=3$. Now if $3 \nmid (q-1)$, then $\gcd(6,\Phi_9)=1$, and so we are done.
Suppose $3|(q-1)$. Note that $T:=\langle x \rangle$ is a cyclic maximal torus of $G$, so $T > \ZB(G) \cong C_3$. It follows that $u=x\ZB(G)$ has order 
$(\Phi_9)/3$ which is coprime to $6$, and hence we have proved Theorem \ref{main1} in this case.

\smallskip
Suppose $G=\tw2 E_6(q)_{\mathrm {sc}}$, so that $S=G/\ZB(G)$ with $|\ZB(G)=\gcd(3,q+1)$. Then the proof of \cite[Theorem 5.1]{LM} shows that $G = \langle x,y \rangle$ for some $x$ of order $\Phi_{18}$ and $y$ of order $2$, with $|xy|=3$. Now if $3 \nmid (q+1)$, then $\gcd(6,\Phi_{18})=1$, and so we are done. 
Suppose $3|(q+1)$. Note that $T:=\langle x \rangle$ is a cyclic maximal torus of $G$, so $T > \ZB(G) \cong C_3$. It follows that $u=x\ZB(G)$ has order 
$(\Phi_{18})/3$ which is coprime to $6$, and hence we proved Theorem \ref{main1} in this case.

\smallskip
Using the recent result of Craven \cite{Cr1}, we can give an alternate argument, which also proves the conclusion of Theorem \ref{main2}, in the cases where $G=E_6(q)_{\mathrm {sc}}$ and $\tw2 E_6(q)_{\mathrm {sc}}$, 
so that $S=G/\ZB(G)$ with $|\ZB(G)=\gcd(3,q-1)$, respectively $|\ZB(G)|=\gcd(3,q+1)$. 
As mentioned in \cite[\S2.2.5]{GT}, $G$ contains regular semisimple elements $x$ of order $\ell_1=\ppd(p,8f)$, and $y$ of order
$\ell_2$, where $\ell_2=\ppd(p,9f)$ if $G$ is of type $E_6$ and $\ell_2 = \ppd(p,18f)$ otherwise, such that $x^Gy^G = G \setminus \ZB(G)$. Conjugating $x$ and $y$ suitably, we may assume that  $|xy|=\ell_3=\ppd(p,12f)$. Note that $\ell_1 \geq 17$ and $\ell_2 \geq 19$. Assume the contrary that $\langle u,v \rangle \leq M$ for some maximal subgroup $M$ of $S$. Using the list of (possible)
maximal subgroups of $S$ as given in \cite[Theorem 1.1]{Cr1} and the fact that $|M|$ is divisible by $\ell_1\ell_2\ell_3$, we see that 
$M = \NB_S(R)$, where $R$ is some non-abelian simple group with either $\ell_1 \nmid |\Aut(R)|$ or $\ell_2 \nmid |\Aut(R)|$, unless
$$(G,R,\ell_1,\ell_2)=(E_6(4)_{\mathrm {sc}},J_3,17,19).$$ 
Note that in fact the latter case cannot occur since we have $\ell_1=257$ when $q=4$.
In the former case, either 
$x$ or $y$ centralizes $R$. In other words, there is some element $a \in \{x,y\}$ such that $R \leq \CB_S(a)$, which is impossible since $\CB_G(a)$ is a torus.

\smallskip
Next we consider the case $G=E_7(q)_{\mathrm {sc}}$, so that $S=G/\ZB(G)$ with $|\ZB(G)=\gcd(2,q-1)$. 
As mentioned in \cite[\S2.2.5]{GT}, $G$ contains regular semisimple elements $x$ of order $\ell_1=\ppd(p,18f)$ and $y$ of order
$\ell_2=\ppd(p,7f)$ such that $x^Gy^G = G \setminus \{1\}$. Conjugating $x$ and $y$ suitably, we may assume that 
$|xy|=\ell_3=\ppd(p,14f)$. Note that $\ell_2 \geq 19$ and $\max(\ell_1,\ell_3) \geq 43$. Assume the contrary that $\langle u,v \rangle \leq M$ for some maximal subgroup $M$ of $S$. Using the list of (possible)
maximal subgroups of $S$ as given in \cite[Theorem 1.1]{Cr2} and the fact that $|M|$ is divisible by $\ell_1\ell_2\ell_3$, we see that 
$M = \NB_S(R)$, where $R = \PSL_2(t)$ with $t \in \{7,8,9,13\}$ or $R = \PGL_3(q)$, $\PGU_3(q)$. In particular, $\ell_1 \nmid |\Aut(R)|$, and so
$x$ centralizes $R$. In other words, $R \leq \CB_S(x)$, which is impossible since $\CB_G(x)$ is a torus.

\smallskip
Suppose $G=S=E_8(q)$. Then the proof of \cite[Proposition 8.1, Theorem 8.2]{LM} when $p=3$, and the proof of \cite[Theorem 9.1]{LM} when 
$p \neq 3$, show that $G = \langle x,y \rangle$ for some $x$ of order $\Phi_{30}$ and 
$y$ of order $2$, with $|xy|=3$. Note that $\gcd(6,\Phi_{30})=1$, so we are done.

\smallskip
Finally, we use \cite{GAP} to check Theorem \ref{main2} for the remaining cases $G_2(3), G_2(4)$, and $\tw2 B_2(8)$. The Schur multiplier of $G_2(3)$ has order 3, and the Schur cover admits a generating pair $x,y$ with $|x| = |y| = |xy| = 13$, and which is Nielsen equivalent to the generating pair given by
$$\texttt{GeneratorOfGroup(AtlasGroup("3.G2(3)"))};$$
The group $G_2(4)$ has a Schur multiplier of order 2, and the Schur cover admits a generating pair $x,y$ with $|x| = |y| = 13, |xy| = 5$, and which is Nielsen equivalent to the pair
$$\texttt{GeneratorOfGroup(AtlasGroup("2.G2(4)"))};$$
The group $\tw2 B_2(8)$ has Schur multiplier isomorphic to $C_2\times C_2$, and the Schur cover admits a generating pair $x,y$ with $|x| = |y| = 7, |xy| = 5$, and which is Nielsen equivalent to the pair
$$\texttt{GeneratorOfGroup(AtlasGroup("2\^{}2.Sz(8)"))};$$

\section{Proof of Theorems \ref{main1}  and \ref{main2}: Linear and unitary groups}

In this section, we prove Theorem \ref{main1} for $S = \PSL_n(q)$, $\PSU_n(q)$, with $n \geq 3$.

\subsection{Low rank groups}\label{slu-small}
We begin with the case $G = \SL_2(q)$, $q = p^f \geq 4$. Then we choose $x|=(q+1)_{2'}$ and $|y|=(q-1)_{2'}$ if none of $q \pm 1$ is a $2$-power, and 
$|x|=q+1$ and $|y|=q-1$ otherwise. Checking the character table 
\cite[Theorems 38.1, 38.2]{Do}, we see that the only nontrivial irreducible character $\chi$ of $G$ with $\chi(x)\chi(y) \neq 0$ is the Steinberg character $\St$, of degree $q$. Since $|\St(x)|=|\St(y)|=1$ but $|\St(z)| < |\St(1)$ for any $z \in G \smallsetminus \ZB(G)$, \eqref{sum1} holds. 
By our choice, $|u|$ and $|v|$ are coprime. We can now choose $|xy|=p$ to fulfill (B). If both $q \pm 1$ are not $2$-powers, then
we also have $(|x|,|y|,|xy|) = (|u|,|v|,|uv|)$. Since none of the maximal subgroups of 
$\SL_2(q)$ \cite[Table 8.1, 8.2]{BHR} can contain elements of every order $|x|$, $|y|$, and $p$, we conclude that $G = \langle x,y \rangle$. 

If $q = 9$, we can take $x = \begin{psmallmatrix} 1 & \alpha^4 \\ 0 & 1\end{psmallmatrix}$ and $y = \begin{psmallmatrix} 1 & 0 \\ \alpha^5 & 1\end{psmallmatrix}$, where $\alpha$ is a generator of $\F_9^\times$. Then $|x| = |y| = 3$ and $|xy| = 5$, thus proving Theorem \ref{main2} for this case. It remains to check Theorem \ref{main2} for $\PSL_2(p)$ where $p$ is a prime $\ge 5$ of the form $2^r\pm 1$. For such $p$, we wish to find generators $x,y\in\SL_2(p)$ such that $|x|,|y|,|xy|$ are all odd. Let $\ell$ be any odd prime divisor of $p^2-1$, and let $\omega\in\F_{p^2}^\times$ be an element of order $\ell$. Then by \cite[Theorem 5.2.10]{Chen24}, we may find $x,y\in\SL_2(p)$ such that $\Tr(x) = \Tr(y) = 2$ and $\Tr(xy) = \omega+\omega^{-1}$. This implies that $|x| = |y| = p$ and $|xy| = \ell$. Moreover, since $\omega+\omega^{-1}\ne 2\in\F_p$, by \cite[Lemma 5.2.9]{Chen24}, the subgroup $\langle x,y\rangle\subset\SL_2(\overline{\F_p})$ is irreducible (has no nontrivial invariant subspaces), and hence by the classification of subgroups of $\SL_2(p)$, we deduce that $x,y$ generate $\SL_2(p)$ (see \cite[Lemma 6.4.1]{Chen24}).

\smallskip
Now we consider the case $G=\SL_3(q)$, $q \geq 3$ (note that $\SL_3(2) \cong \PSL_2(7)$. Then we choose $|x|=(q^2+q+1)/\gcd(3,q-1)$, $|y|=q^2-1$
if $3 \nmid (q-1)$, and $|y|=(q^2-1)_{3'}$ if $3|(q-1)$. Note that both $|x|$ and $|y|$ are coprime to $|\ZB(G)|=d:=\gcd(3,q-1)$, and $|u|=|x|$ and 
$|v|=|y|$ are coprime.
Checking the character table of $G$ (say using {\sf Chevie}), one sees that the only nontrivial irreducible character $\chi$ of $G$ with $\chi(x)\chi(y) \neq 0$ is the Steinberg character $\St$, of degree $q^3$. Since $|\St(x)|=|\St(y)|=1$ but $|\St(z)| < |\St(1)$ for any $z \in G \smallsetminus \ZB(G)$, \eqref{sum1} holds. 
We can now choose $|xy|=p=|w|$ to fulfill (B). Assuming the contrary that $\langle x,y \rangle \leq M < G$ for a maximal subgroup $M$ of $G$, we will use \cite[Tables 8.3, 8.4]{BHR} to check possible candidates for $M$. First, if $q=4$, then $|u|=7$ and $|v|=5$, but no maximal subgroup of $S=\PSL_3(4)$ has order 
divisible by $35$, a contradiction. So we may assume $q \neq 4$, and hence $|u|$ is divisible by $\ell_1=\ppd(p,3f)$. Now $|u|=|x| \geq 13$, ruling out the possibilities  $M = d \times \PSL_2(7)$
and $M = 3 \cdot \mathsf{A}_6$. If $2|f$, so $f \neq 3$ and $|v|$ is divisible by $\ell_2=\ppd(p,2f)$, ruling the possibility $M = \SU_3(\sqrt{q})$. All other possibilities for
$M$ are ruled out by the fact that $|M|$ is divisible by $\ell_1p|y|$.

\smallskip
Next assume that $G=\SU_3(q)$, $q =p^f \geq 3$. Then we choose $|x|=(q^2-q+1)/d$, where $d:=\gcd(3,q+1)$, $|y|=q^2-1$ if $d=1$, and 
$|y|=(q^2-1)_{3'}$ if $d=3$. Note that both $|x|$ and $|y|$ are coprime to $|\ZB(G)|=d$, and $|u|=|x|$ and $|v|=|y|$ are coprime.
Checking the character table of $G$ (say using {\sf Chevie}), one sees that the only nontrivial irreducible character $\chi$ of $G$ with $\chi(x)\chi(y) \neq 0$ is the Steinberg character $\St$, of degree $q^3$. Since $|\St(x)|=|\St(y)|=1$ but $|\St(z)| < |\St(1)$ for any $z \in G \smallsetminus \ZB(G)$, \eqref{sum1} holds. 
We can now choose $|xy|=p=|w|$ to fulfill (B). Assuming the contrary that $\langle x,y \rangle \leq M < G$ for a maximal subgroup $M$ of $G$, we will use \cite[Tables 8.5, 8.6]{BHR} to check possible candidates for $M$. Note that $|u|$ is divisible by $\ell_1=\ppd(p,6f) \geq 7$, ruling out the possibilities 
$M = 3 \cdot \mathsf{A}_6$ and $3 \cdot \mathsf{A}_6 \cdot 2_3$. Next, $|v| \geq 8$, ruling out the possibility  $M = d \times \PSL_2(7)$,
and also $M=3 \cdot \mathsf{A}_7$ when $q=5$.
All other possibilities for $M$ are ruled out by the fact that $|M|$ is divisible by $\ell_1p|y|$.

\smallskip
We next record the following facts, established in \cite{GT} based on results of \cite{MSW} and \cite{LST11}.

\begin{lem}\label{rss-A}
Let $q=p^f$ and $n \geq 4$. Then the following statements hold.
\begin{enumerate}[\rm(i)]
\item {\rm (\cite[\S2.2.1]{GT})} Let $G = \SL_n(q)$ with $(n,q) \neq (6,2)$, $(7,2)$, $T_1 < G$ a cyclic maximal torus of order $(q^n-1)/(q-1)$ and $T_2 < G$ a cyclic maximal torus of order $q^{n-1}-1$. Choose $\ell_1= \ppd(p,nf)$, 
and $\ell_2 = \ppd(p,(n-1)f)$ if $(n,q) \neq (4,4)$ and $\ell_2 = 7$ otherwise. Then, for any element $x \in T_1$ of order divisible by $\ell_1$ and any
$y \in T_2$ of order divisible by $\ell_2$, $x^G \cdot y^G \supseteq G \smallsetminus \ZB(G)$. 
\item {\rm (\cite[\S2.2.2]{GT})} Let $G = \SU_n(q)$ with $(n,q) \neq (4,2)$, $T_1 < G$ a cyclic maximal torus of order $(q^n-(-1)^n)/(q+1)$ and $T_2 < G$ a cyclic maximal torus of order $q^{n-1}-(-1)^{n-1}$. If $2 \nmid n$,  choose $\ell_1 = \ppd(p,2nf)$, and 
$\ell_2 = \ppd(p,(n-1)f)$ when $n \equiv 1 \pmod{4}$, $\ell_2 = \ppd(p,(n-1)f/2)$ if $n \equiv 3 \pmod{4}$ and $(n,q) \neq (7,4)$, and 
$\ell_2 = 7$ if $(n,q) = (7,4)$. If $2|n$,  choose $\ell_2 = \ppd(p,2(n-1)f)$, and 
$\ell_1 = \ppd(p,nf)$ when $n \equiv 0 \pmod{4}$, $\ell_1 = \ppd(p,nf/2)$ if $n \equiv 2 \pmod{4}$ and $(n,q) \neq (6,4)$, and 
$\ell_2 = 7$ if $(n,q) = (6,4)$. Then, for any element $x \in T_1$ of order divisible by $\ell_1$ and any
$y \in T_2$ of order divisible by $\ell_2$, $x^G \cdot y^G \supseteq G \smallsetminus \ZB(G)$. 
\end{enumerate}
\end{lem}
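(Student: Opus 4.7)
The plan is to reduce the assertion to verifying the Frobenius character-sum inequality \eqref{sum1} for every $z \in G \smallsetminus \ZB(G)$, by partitioning $\Irr(G)$ into characters that vanish at $x$ or at $y$ and a short list of exceptional characters whose combined contribution can be controlled.

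First, I would exploit that $\ell_1 := \ppd(p,nf)$ divides $|T_1|$ but no smaller cyclotomic factor of $|G|$, so that $\ell_1$ is a large prime in general position with respect to $G$. Any irreducible character of $G$ lying in an $\ell_1$-block of defect zero must vanish on the $\ell_1$-singular element $x$, and by the Fong--Srinivasan and Brou\'e--Michel description of $\ell$-blocks of finite reductive groups at primes in general position (which underpins the character estimates of \cite{MSW}), only a controlled family of Lusztig series contributes $\ell_1$-blocks of non-zero defect. The upshot is a short explicit list of characters non-vanishing at $x$: always including $1_G$ and the Steinberg character $\St$, plus a bounded number of Deligne--Lusztig characters attached to tori containing $T_1$. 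An analogous analysis applied to $y$ using $\ell_2$ produces a second short list; because $T_1$ (a Coxeter-type torus) and $T_2$ (a subregular torus obtained from $\GL_{n-1}$ or its unitary analogue) lie in distinct Lusztig series, the two lists intersect in a small explicit set.

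Next, for each surviving $\chi$, Deligne--Lusztig theory yields $|\chi(x)|,|\chi(y)| \leq c$ for a small uniform constant $c$ (indeed $\leq 1$ when $\chi$ is semisimple and the element is regular semisimple). Combining this with the uniform character-value bound $|\chi(z)|/\chi(1) \ll \chi(1)^{-\alpha}$ for $z \notin \ZB(G)$ established in \cite{LST11}, the tail sum in \eqref{sum1} becomes a geometrically convergent series that is strictly smaller than $1$ whenever $n$ and $q$ avoid the listed exceptional pairs. The case-splitting on $n \bmod 4$ appearing in part (ii) simply reflects the fact that the orders of the Coxeter and subregular tori in $\SU_n(q)$ depend on this residue, which in turn dictates the correct choice of primitive prime divisor for $\ell_i$.

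I expect the main obstacle to be the handling of the small exceptional cases $(n,q) \in \{(6,2),(7,2),(4,4),(6,4),(7,4)\}$, where the relevant primitive prime divisor either fails to exist by Zsigmondy's theorem or is too small for the uniform character-sum bound to close. In those cases the ad hoc choice $\ell_2 = 7$ replaces the missing primitive prime divisor, and one verifies the class-product identity directly, either by a careful hand-computation of the finitely many remaining non-vanishing characters using explicit Deligne--Lusztig formulas, or (for the smallest groups) by direct computation. This case-by-case verification, carried out in \cite[\S 2.2.1, \S 2.2.2]{GT}, completes the argument.
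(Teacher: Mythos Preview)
The paper does not prove this lemma at all: it is stated as a quotation of results from \cite[\S2.2.1--2.2.2]{GT}, which in turn build on \cite{MSW} and \cite{LST11}. So there is no ``paper's own proof'' to compare against beyond the citation.

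Your sketch is a fair high-level outline of the argument actually carried out in those references, and you correctly identify the endpoint as \cite{GT}. One point of emphasis worth adjusting: the mechanism in \cite{MSW} for cutting down the list of non-vanishing characters is not primarily the $\ell$-block/defect-zero argument you describe, but rather the Deligne--Lusztig character formula applied directly to regular semisimple elements in the specified maximal tori $T_1,T_2$. For such elements the only unipotent (and more generally Lusztig-series) characters that survive are those whose associated torus is $G$-conjugate to $T_i^*$, which for Coxeter-type and subregular tori yields a very short explicit list. Your defect-zero observation is valid and overlaps with this, but it is a coarser filter; the finer torus-conjugacy argument is what actually produces the intersection $\{1_G,\St,\ldots\}$ with only one or two further characters. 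After that, the bound $|\chi(z)| < \chi(1)$ for $z \notin \ZB(G)$ (or the sharper estimates from \cite{LST11} when needed) finishes \eqref{sum1}, exactly as you say.

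Your treatment of the exceptional pairs is also accurate: these are precisely the Zsigmondy exceptions or cases where the primitive prime divisor is too small, handled ad hoc in \cite{GT}.
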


\smallskip
Assume $G = \SL_4(q)$ with $q \geq 3$ (note that $\SL_4(2) \cong \mathsf{A}_8$). In the notation of Lemma \ref{rss-A}, choose $x$ of prime order
$\ell_1$, and $y$ of odd order $q^2+q+1$. Then (A) and (B) hold; in particular, we can choose $xy$ of order $p$, whence 
$(|x|,|y|,|xy|)=(|u|,|v|,|uv|)$.  Assuming the contrary that $\langle x,y \rangle \leq M < G$ for a maximal subgroup $M$ of $G$, we will use \cite[Tables 8.8, 8.9]{BHR} to check possible candidates for $M$. Note that $|y|=q^2+q+1 \geq 13$, ruling out the possibilities $\AAA_7$, $2\cdot \AAA_7$,
$\PSL_2(7)$ and $\SU_4(2)$ for $M$. All other possibilities for $M$ are ruled out by the fact that $|M|$ is divisible by $\ell_1(q^2+q+1)$.
  
\smallskip
Assume $G = \SU_4(q)$. In the notation of Lemma \ref{rss-A}, if $q \neq 3$ we choose $x$ of odd order
$(q^2+1)/\gcd(2,q-1)$, and $y$ of order $q^2-q+1$. If $q=2$ we choose $x,y$ so that $|x|= 5$ and $|y|=9$. If $q=3$ we choose $x,y$ so that $|u|= 8$ and $|v|=7$. Then (A) and (B) hold; in particular, we can choose $w=uv$ of order $p$ if $q > 3$ or $q=2$, $9$ if $q=3$. When $q=2$, this choice of $u,v,w$ shows that $\langle u,v \rangle =S$ since no maximal subgroup of 
$S$ can have elements for each order $5$ and $9$ \cite{Atlas}. 
When $q=3$, this choice of $u,v,w$ shows that $\langle u,v \rangle =S$ since no maximal subgroup of 
$S$ can have elements for each order $7$, $8$, and $9$ \cite{Atlas}.  Assuming the contrary that $\langle x,y \rangle \leq M < G$ for a maximal subgroup $M$ of $G$ when $q>3$, we will use \cite[Tables 8.10, 8.11]{BHR} to check possible candidates for $M$. Note that 
$|x| = (q^2+1)/\gcd(2,q-1) \geq 13$ and $|v|=q^2-q+1 \geq 7$, ruling out the possibilities $d \circ \SL_2(7)$ and $d \circ \Sp_4(3)$ for $M$, and also 
$M \not\cong d \circ 2 \cdot \AAA_7$ if $q \geq 5$. All other possibilities for $M$ are ruled out by the fact that $|M|$ is divisible by $(q^2+1)(q^2-q+1)/\gcd(2,q-1)$.  

\smallskip
Assume $G = \SL_5(q)$. In the notation of Lemma \ref{rss-A}, choose $x$ of prime order
$\ell_1 \geq 11$, and $y$ of order $(q^2+1)(q+1) \geq 15$. Then (A) and (B) hold; in particular, we can choose $xy$ of order $p$.  Assuming the contrary that $\langle x,y \rangle \leq M < G$ for a maximal subgroup $M$ of $G$, we will use \cite[Tables 8.18, 8.19]{BHR} to check possible candidates for $M$. Note that 
$|y| \geq 15$ shows $M \not\cong M_{11}$ when $q=3$, and $|u|=\ell_1 \geq 11$ shows that $M \neq d \times \SU_4(2)$.
In fact we have $|v|=|y| \geq 15$, and this shows $M \neq d \times \PSL_2(11)$. (We freely use the fact that maximal subgroups in class $\mathcal{S}$,
such as the ones listed in \cite[Table 8.19]{BHR}, act absolutely irreducibly on the natural $G$-module $\F_q^5$, and hence their centers are contained in
$\ZB(G)$.) All other possibilities for $M$ are ruled out by the fact that $|M|$ is divisible by $\ell_1(q^2+1)(q+1)$.

\smallskip
Assume $G = \SU_5(q)$. In the notation of Lemma \ref{rss-A}, choose $x$ of prime order
$\ell_1 = \ppd(p,10f) \geq 11$, and $y$ of order $q^2+1$ (which is coprime to $11$). 
Then (A) and (B) hold; in particular, we can choose $xy$ of order $p$ if $q>2$, and $9$ if $q=2$.  Assuming the contrary that $\langle x,y \rangle \leq M < G$ for a maximal subgroup $M$ of $G$, we will use \cite[Tables 8.20, 8.21]{BHR} to check possible candidates for $M$. Note that $|u|=\ell_1 \geq 11$ shows that $M \neq d \times \SU_4(2)$. Another candidate from \cite[Table 8.21]{BHR} is 
$M = d \times \PSL_2(11)$ with $d = \gcd(q+1,5)$, and $q=2$ or $q \geq 7$. However, in such a case $M$ does not contain any element
of order $9$, contradicting $|w|=9$ when $q=2$, and $M$ has no $11'$-element of order $>30$, contradicting $|y|=q^2+1 \geq 50$ when $q \geq 7$. 
All other possibilities for $M$ are ruled out by the fact that $|M|$ is divisible by $\ell_1(q^2+1)$.

\smallskip
Assume $G = \SL_6(q)$. In the notation of Lemma \ref{rss-A}, when $q>2$ we choose $x$ of prime order
$r=\ell_1 = \ppd(p,6f) \geq 7$, and $y$ of order $s=\ell_2= \ppd(p,5f) \geq 11$. If $q=2$, we choose $x$ of order $r=q^6-1=63$ and $y$ of order $s=\ell_2=q^5-1=31$, and set $\ell_1=7$. Then (A) and (B) hold; in particular, we can choose $xy$ of order $p$.  Assuming the contrary that $\langle x,y \rangle \leq M < G$ for a maximal subgroup $M$ of $G$, we will use \cite[Tables 8.24, 8.25]{BHR} to check possible candidates for $M$. Note that $|u|$ is divisible by the prime $\ell_1 \equiv 1 \pmod{6}$ and 
$|v| = \ell_2 \equiv 1 \pmod{5}$. This rules out all possibilities for $M$ listed in \cite[Table 8.25]{BHR}.
All other possibilities for $M$ are ruled out by the fact that $|M|$ is divisible by $rs$.

\smallskip
Assume $G = \SU_6(q)$. In the notation of Lemma \ref{rss-A}, choose $y$ of prime order
$s=\ell_2 = \ppd(p,10f) \geq 11$, and $x$ of order $r=\ell_1 = \ppd(p,3f) \geq 7$ if $q\neq 4$ and of order $q^2+q+1 = 21$ if $q=4$. 
Then (A) and (B) hold; in particular, we can choose $w=uv$ of order $p$ if $q>2$ and of order $15$ if $q=2$.  Assuming the contrary that $\langle x,y \rangle \leq M < G$ for a maximal subgroup $M$ of $G$, we will use \cite[Tables 8.26, 8.27]{BHR} to check possible candidates for $M$. Note that $|u|$ is divisible by the prime $\ell_1 \equiv 1 \pmod{6}$ and 
$|v| = \ell_2 \equiv 1 \pmod{10}$. This, together with $|uv|=15$ when $q=2$, rules out all possibilities for $M$ listed in \cite[Table 8.27]{BHR}.
All other possibilities for $M$ are ruled out by the fact that $|M|$ is divisible by $rs$. 

This proves Theorem \ref{main1} and \ref{main2} for all $\PSU_6(q)$ with $q \ne 2$. To verify Theorem \ref{main2} for $S = \PSU_6(2)$, let $G$ be the unique perfect central extension of $S$ with center $C_3$. Then $G$ admits a generating pair $x,y$ with $|x| = |y| = 11, |xy| = 7$, and which is Nielsen equivalent to the pair given by the \cite{GAP} command
$$\texttt{GeneratorsOfGroup(AtlasGroup("3.U6(2)"));}$$

\smallskip
Assume $G=\SL_7(q)$. In the notation of Lemma \ref{rss-A}, when $q> 2$ we choose $x$ of order
$r=\ell_1 = \ppd(p,7f) \geq 29$, and $y$ of order $s=\ell_2 = \ppd(p,6f) \geq 7$. 
Then (A) and (B) hold; in particular, we can choose $w=uv$ of order $2$. If $q=2$, we choose $x \in G$ of order $r=\ell_1=127$ and $y \in G$ in class $7c$ in \cite{GAP} of order $s=\ell_2=7$, and using \cite{GAP} we can again check that 
\eqref{sum1} holds; in particular, we can choose $xy$ of order $2$. According to \cite[Tables 8.37, 8.38]{BHR}, no maximal subgroup of $G$ can have 
order divisible by $L=2\ell_1\ell_2$.  It follows that $S = \langle u,v \rangle$.

\smallskip
Assume $G = \SU_7(q)$. In the notation of Lemma \ref{rss-A}, choose $x$ of order
$r=\ell_1 = \ppd(p,14f) \geq 29$, and $y$ of order $s=\ell_2 = \ppd(p,3f) \geq 7$ if $q\neq 4$ and of order $7$ if $q=4$. 
Then (A) and (B) hold; in particular, we can choose $xy$ of order $2$. According to \cite[Tables 8.37, 8.38]{BHR}, no maximal subgroup of $G$ can have 
order divisible by $2\ell_1$. Hence $S = \langle u,v \rangle$.

\subsection{The case $G=\SL_n(q)$ with $n \geq 8$} 
In the notation of Lemma \ref{rss-A} we choose $x$ of order
$r=\ell_1 = \ppd(p,nf) \geq nf+1 \geq 11$, and $y$ of order $s=\ell_2 = \ppd(p,(n-1)f) \geq (n-1)f+1 \geq 11$. 
Then (A) and (B) hold; in particular, we can choose $xy$ of order $\ell_3=\ppd(p,(n-2)f)$ if $(n,q) \neq (8,2)$ and 
order $\ell_3=7$ otherwise. We also note that $(|x|,|y|,|xy|)=(|u|,|v|,|uv|)$, and
$$\max(\ell_1,\ell_2) \geq 2n-1.$$
(Indeed, otherwise we must have that $f=1$ and $(\ell_1,\ell_2) = (n+1,n)$, which is impossible.)

\smallskip
It remains to show that $S=\langle u,v \rangle$. 
Assume the contrary: $\langle x,y\rangle$ is contained in a maximal subgroup $M$ of $\SL_n(q)$. By its choice, $x$ has 
the $\ppd(n,q;e)$-property as defined in \cite{GPPS}, with $e=n$. Similarly, $y$ has 
the $\ppd(n,q;n-1)$-property. Hence we can apply \cite[Main Theorem]{GPPS} to see that $M$ is in one 
of the Examples 2.1--2.9 described therein. 
In what follows, to indicate $M$ in as in Example 2.k of \cite{GPPS}, $1 \leq \mbox{k} \leq 9$, we will say that
$$M \mbox{ is in (2.k), or we are in (2.k)}.$$

Suppose $M$ is in (2.1). Since $\ell_1=\ppd(p,nf)$, we must have $q_0=q$. Furthermore, as $M < \SL_n(q)$, either $2|n$ and 
$M \leq \mathrm{CSp}_n(q)$ or $M \leq \mathrm{CO}^-_{n}(q)$, or $2|f$, $2 \nmid n$, and $M \leq \mathrm{CU}_n(\sqrt{q})$
(these three groups are conformal groups, i.e. the subgroup of $\GL_n(q)$ that preserves the symplectic, orthogonal, or Hermitian form,
up to $\F_q$-scalars). However, none of these conformal groups have order divisible by $\ell_2=\ppd(p,(n-1)f)$.

\smallskip
Note that $x$ is irreducible on the natural module $\F_q^n$, showing $M$ is not in (2.2). Also, $e=n$ so $M$ is not in (2.3).
Suppose $M$ is in (2.4). As $e=n$, we have $M \leq \GL_{n/b}(q^b) \cdot b$ for some divisor $b>1$ of $n$. Then $\ell_2=\ppd(p,(n-1)f)$ cannot
divide $|\GL_{n/b}(q^b)|$, so $\ell_2$ must divide $b$. But $\ell_2 \geq (n-1)f+1 \geq n$, so $b=n=\ell_2$. In this case,
$M \leq \GL_1(q^n)\cdot \ell_2$, a contradiction as $\ell_3$ divides $|xy|$.  

\smallskip
Suppose $M$ is in (2.5). Then $n=2^m$ and $\ell_1=n+1$ is a Fermat prime. As $\ell_1 \geq nf+1$, we have $q=p$. As mentioned above,
$y$ has the $\ppd(n,q;n-1)$-property, and $\ell_2 =\ppd(p,n-1) \geq 2n-1$ (as $n=2^m$). This is impossible for the groups in (2.5).

\smallskip
All the remaining candidates for $M$ are almost quasisimple; in particular, $L:=M^{(\infty)}$ is quasisimple. 
For the groups in (2.6), $\max(\ell_1,\ell_2) \leq \max(n+2,7) < 2n-1$, a contradiction. 
For the groups in (2.8), $e$ can be equal to $n$ only when $n=4$, a contradiction.

\smallskip
Suppose we are in (2.7). For a fixed pair $(n,L)$, the only pairs of
two distinct primes $\{\ell_1,\ell_2\}$ both larger than $10$ that occurs for groups in (2.7) are $\{11,23\}$ (for $n=11$, $p=2$, and 
$L = M_{23}$ or $L=M_{24}$), $\{17,19\}$ (with $n=18$), and $\{11,13\}$ (with $n=12$). The latter two pairs cannot realize in our situations
because we would then have $\{\ell_1,\ell_2\} = \{n-1,n+1\}$. So we are in the former case, whence $q=p=2$ and $L \lhd M \leq \NB_G(L)=L$, 
i.e. $M=L$ and $G=\SL_{11}(2)$. However, in this case we can choose $\ell_1=\ppd(2,11)$ to be $89$, which does not divide $|M|$, a contradiction.

\smallskip
Finally, suppose $M$ is in (2.9). Then again $11 \leq \ell_1 \in \{n+1,2n+1\}$ and $11 \leq \ell_2 \in \{n,2n-1\}$. None of the groups in \cite[Table 7]{GPPS}
can possess such a pair $\{\ell_1,\ell_2\}$. Inspecting possibilities for $\{\ell_1,\ell_2\}$ for a fixed pair
$(n,L)$ in \cite[Table 8]{GPPS}, we see that $L=\PSL_2(s)$, and either $s=2^c$, $n=s$ or $s+1$, $\{\ell_1,\ell_2\} = \{s-1,s+1\}$, 
or $s$ is a prime, $n = (s \pm 1)/2$, and $\{\ell_1,\ell_2\} = \{s,(s\pm 1)/2\}$. In the first case, $n \geq 8$ implies that $s=2^c \geq 8$, so 
$s-1$ and $s+1$ cannot be primes at the same time. In the second case, the indicated ranges for $\ell_1,\ell_2$ imply that $n=(s-1)/2$,
$\ell_1=2n+1$, and $\ell_2=n \geq 8$; in particular, $n \geq 11$. Now we observe that the element $xy$ also has the
$\ppd(n,q;n-2)$-property, as it has order divisible by $\ell_3=\ppd(p,(n-2)f)$. As $n$ is prime, $\ell_3 \geq 2n-3$, and  
there is no place for the third prime $\ell_3$ to occur for $M$.

\subsection{The case $G=\SU_n(\sqrt{q})$ with $n \geq 8$} 
We depart from the notation of \S\ref{slu-small} and denote $q=p^{2f}$, so that 
$G=\SU_n(p^f) = \SU(V)$ is a subgroup of $\GL_n(q) = \GL(V)$, 
where $V=\F_q^n$ is endowed with a non-degenerate Hermitian form $\circ$.
In the notation of Lemma \ref{rss-A} we choose $x$ of order
$r=\ell_1$, and $y$ of order $s=\ell_2$. If $2 \nmid n$, then $\ell_1=\ppd(p,2nf) \geq 2n+1 \geq 19$.
If $2|n$, then $\ell_2 = \ppd(p,2(n-1)f) \geq 2n-1 \geq 19$.
Then (A) and (B) hold; in particular, we can choose $xy$ of order $\ell_3=\ppd(p,2(n-2)f) \geq 2n-3$ if $2 \nmid n$, and 
order $\ell_3=\ppd(p,2(n-3)f) \geq 2n-5$ if $2|n$. Note that $(|x|,|y|,|xy|)=(|u|,|v|,|uv|)$.

\smallskip
It remains to show that $S=\langle u,v \rangle$. 
Assume the contrary: $\langle x,y\rangle$ is contained in a maximal subgroup $M$ of $\SL_n(q)$. When $2 \nmid n$, $x$ has 
the $\ppd(n,q;e_1)$-property with $e_1=n$, and $xy$ has the $\ppd(n,q;e_2)$-property with $e_2=n-2$. When $2|n$, $y$ has 
the $\ppd(n,q;e_1)$-property with $e_1=n-1$, and $xy$ has the $\ppd(n,q;e_2)$-property with $e_2=n-3$. 
Hence we can apply \cite[Main Theorem]{GPPS} to see that $M$ is in one 
of the Examples 2.1--2.9 described therein. 

\smallskip
Suppose $M$ is in (2.1). Since $e_1 \geq n-1$, we must have $q_0=q$. Furthermore, as $M < \SU_n(\sqrt{q})$, $2|e_1$, and either $2|n$ and 
$M \leq \mathrm{CSp}_n(q)$, or $M \leq \mathrm{CO}^\eps_{n}(q)$. However, our $e_1$ is always odd, a contradiction.

\smallskip
Suppose $M$ is in (2.2), i.e. $M$ is reducible on $V=\F_q^n$. As $x$ is irreducible on $V$ when $2 \nmid n$, we must have that
$2|n$. In this case, $y$ cannot act nontrivially on any subspace of $V$ of dimension less than $n-1$, so $M < \SU(V)$ preserves 
an orthogonal decomposition $V = \F_q^{n-1} \oplus \F_q$, showing that
$M \leq \GU_1(\sqrt{q}) \times \GU_{n-1}(\sqrt{q})$. However, the order of such $M$ is not divisible by $\ell_1=|x|$. 

\smallskip
Suppose $M$ is in (2.3). As $e_1=n$ when $2 \nmid n$, we must have that $2|n$ and $e_1=n-1$. But in this case
$\ell_2 = \ppd(p,2(n-1)f) \geq 2n-1$, impossible for groups in (2.3).

\smallskip
Suppose $M$ is in (2.4). In \cite[Example 2.4(a)]{GPPS} we have $e_1=n-1$, so $2|n$, but then $\ell_2 \geq 2n-1$, a contradiction.  
So $M \leq \GL_{n/b}(q^b) \cdot b$ for some divisor $b>1$ of $\gcd(n,e_1)$. This implies that $e_1=n$, whence $2 \nmid n$. 
Using $1 < b|n$ we can check that $\ell_2 \in \{\ppd(p,(n-1)f),\ppd(p,(n-1)f/2)\}$ cannot
divide $|\GL_{n/b}(q^b)|$, so $\ell_2$ must divide $b$. But $\ell_2 \geq (n-1)f/2+1 > n/2$, so $b=n=\ell_2$. In this case,
$M \leq \GL_1(q^n)\cdot \ell_2$, a contradiction as $\ell_3$ divides $|xy|$.  

\smallskip
Suppose $M$ is in (2.5). Then $n=2^m$ and $\max(\ell_1,\ell_2)=n+1$, a contradiction.

\smallskip
All the remaining candidates for $M$ are almost quasisimple; in particular, $L:=M^{(\infty)}$ is quasisimple. 
For the groups in (2.6), $\max(\ell_1,\ell_2) \leq \max(n+2,7) < 2n-1$, a contradiction. For the groups in (2.8), $e_1$ can be $\geq n-1$ only when $n \leq 7$, a contradiction.

\smallskip
Suppose we are in (2.7); in particular, $n \neq 8$, and so $n \geq 9$. If $2\nmid n$, then we note that $\ell_1,\ell_3 \geq 19$. Similarly,
if $2\nmid n$, then $\ell_2,\ell_3 \geq 19$. However, for a fixed pair $(n,L)$, there is no pairs of
two distinct primes both larger than $18$ that can occur for groups in (2.7), a contradiction.

\smallskip
Finally, suppose $M$ is in (2.9). None of the groups in \cite[Table 7]{GPPS}
can admit a prime $\ell_i > 13$. Inspecting possibilities for $\{\ell_1,\ell_3\}$ when $2\nmid n$, and $\{\ell_2,\ell_3\}$ when $2|n$ for a fixed pair
$(n,L)$ in \cite[Table 8]{GPPS}, we see that $L=\PSL_2(s)$, and either $s=2^c$ and $n=s$ or $s+1$, with the pair of primes being $\{s-1,s+1\}$, 
or $s$ is a prime and $n = (s \pm 1)/2$, with the pair of primes being  $\{s,(s\pm 1)/2\}$. In particular, at least one of the occurring primes 
is $\leq n+1$, whereas both of our primes in consideration is at least $n+3$, a contradiction.

\section{Proof of Theorems \ref{main1}  and \ref{main2}: Symplectic and odd-dimensional orthogonal groups}

\subsection{Some low-rank groups}

\smallskip


First assume that $G = \Sp_6(2)$. Let $H$ be its unique double cover. Using \cite{GAP}, we can check that there exist generators $x,y\in H$ such that $|x| = 5,|y| = 7,|xy| = 9$. This pair is Nielsen equivalent to the pair given by the \cite{GAP} command
$$\texttt{GeneratorsOfGroup(AtlasGroup("2.S6(2)"));}$$
This proves Theorems \ref{main1} and \ref{main2} for $\Sp_6(2)$.

\smallskip
Assume $G = \Sp_8(3)$. As shown in the proof of \cite[Lemma 2.12]{GT}, we can choose $x \in G$ of order
$\ell_1=41$, and $y$ of order $s=10$ such that $x^Gy^G = G \setminus \{1\}$. In particular, we can choose $x$ and 
$y$ so that $|xy|=13$. Checking \cite[Tables 8.48, 8.49]{BHR}, we see that no maximal subgroup of $G$ has order divisible by $41 \cdot 13 \cdot 10$, 
whence $G = \langle x,y \rangle$. Note that $v=y\ZB(G)$ also has order $10$ in $S$, so we have proved Theorem \ref{main2} for $S = \PSp_8(3)$.

\smallskip
Assume $G = \mathrm{Spin}_9(3)$. As shown in the proof of \cite[Lemma 2.12]{GT}, we can choose $x \in G$ of order
$\ell_1=41$, and $y \in G$ of order $s=39$ such that $x^Gy^G = G \setminus \ZB(G)$. In particular, we can choose $x$ and 
$y$ so that $|xy|=5$. Checking \cite[Tables 8.58, 8.59]{BHR}, we see that the only maximal subgroup of $S$ of order divisible by $41 \cdot 39$
is $\Omega^-_8(3) \cdot 2$. However, such a group has no element of order $39$ \cite{GAP}.
Hence $S = \langle u,v \rangle$, and we have proved Theorems \ref{main1} and \ref{main2} for $S = \Omega_9(3)$.

\smallskip
Next we handle the case $S = \PSp_{4}(q)'$. We may assume that $q \geq 4$, since $\Sp_4(2)' \cong \AAA_6$ and $\PSp_4(3) \cong \SU_4(2)$ have already been handled.
First we consider the case $2 \nmid q \geq 5$. The proof of \cite[Lemma 2.11]{GT} shows that we can find a regular semisimple element $x \in G$ of 
odd order $r=(q^2+1)/2 \geq 13$, and a regular semisimple element $y \in G$ of order $s|(q^2-1)/2$ such that $x^Gy^G \supseteq G \setminus \ZB(G)$. 
In fact, unless $q = 5,7$, $y$ can be chosen so that $s$ is odd: this was done in the proof of \cite[Lemma 2.11]{GT} if both $q \pm 1$ are not $2$-powers or $q=9$.
If $q \geq 17$ is a Fermat prime, we choose $y$ from class $B_4(2,4)$ of order $(q+1)/2$ in the notation of \cite{Sr}, and if $q \geq 31$ is a Mersenne prime, we choose $y$ from class $B_3(2,4)$ of order $(q-1)/2$.
Hence we can conjugate $x$ and $y$ so that $xy$ is a transvection (of order $p$); furthermore,
$(|x|,|y|,|xy|)=(|u|,|v|,|uv|)$ if $q \geq 9$.
Now suppose that $\langle x,y \rangle \leq M$ for some maximal subgroup $M$ of $G$.  Note that
$r=|x|$ is odd and divisible by $\ppd(p,4f)$. So using \cite[Tables 8.12, 8.13]{BHR}, we get that $M = \Sp_2(q^2) \rtimes 2$, an extension field subgroup of
$G$. As $|xy|=p$ is odd, $xy \in \Sp_2(q^2)$. But any element of order $p$ in $\Sp_2(q^2)$ while acting on $\F_{q^2}^2$ a fixed point subspace of 
dimension $1$ over $\F_{p^2}$, i.e. $2$-dimensional fixed point subspace on $\F_q^4$, and thus cannot be a transvection. 
Hence $G=\langle x,y \rangle$, as desired. We now check Theorem \ref{main2} for $q = 5,7$. For $\PSp_4(5)$, we check using \cite{GAP} that its unique double cover admits a generating pair $x,y$ such that $|x| = 5, |y| = 13, |xy| = 15$ and which is Nielsen equivalent to the pair
$$\texttt{GeneratorsOfGroup(AtlasGroup("2.S4(5)"));}$$
For $q = 7$, its unique double cover admits a generating pair $x,y$ with $|x| = |y| = |xy| = 25$, and which is Nielsen equivalent to the pair
$$\texttt{GeneratorsOfGroup(AtlasGroup("2.S4(7)"));}$$

Suppose now that $q=2^f \geq 4$. Then we choose a regular semisimple element $x$ of order $r=q^2+1$, in class $B_5(i)$ in the notation of
\cite{Eno}, and a regular semisimple element $y$ of order $s=q+1$, in class $B_4(i,j)$ and with centralizer of order
$(q+1)^2$. Using the character table of $G$ \cite{Eno}, we can check
that only three characters: $1_G$, the Steinberg character $\St$, and another ($\theta_8$ in \cite{Eno}, of degree $q(q-1)^2/2$) can be non-vanishing 
at $x$ and at $y$. This allows us to verify \eqref{sum1}, and thus $x^Gy^G \supseteq G \setminus \{1\}$. As in the previous case, we may choose
$x$ and $y$ so that $xy$ is a transvection (of order $2$0; by our choice $(|x|,|y|,|xy|)=(|u|,|v|,|uv|)$.
Now suppose that $\langle x,y \rangle \leq M$ for some maximal subgroup $M$ of $G$.  Note that
$r=|x|$ is divisible by $\ppd(2,4f)$. So using \cite[Table 8.14]{BHR}, we get that $M \cong \Sp_2(q^2) \rtimes 2$ or $\SO^-_4(q) \cong \Sp_2(q^2)$. 
In either case, as $y$ has odd order (dividing $q^2-1$), $y \in \Sp_2(q^2)$ and hence its centralizer in $M$ has order divisible by $q^2-1$, contradicting the fact
that $|\CB_G(y)|=(q+1)^2$. Hence $G=\langle x,y \rangle$, as desired.

\subsection{The general case}
We will need the following result which is essentially recorded in \cite[\S2.2.3]{GT}, whose proof relies on \cite[Theorem 2.3]{MSW}.

\begin{lem}\label{rss-BC}
Let $q=p^f$, $n \geq 2$, $(n,q) \neq (2,2)$, $(2,3)$, $(3,2)$, $(4,3)$. Let $G = \mathrm{Spin}_{2n+1}(q)$ with $2 \nmid q$, or $G=\Sp_{2n}(q)$, and let
$T_1 < G$ be a maximal torus of order $q^n+1$, $T_2 < G$ a cyclic maximal torus of order $q^n-1$. Let $\ell_1 = \ppd(p,2nf)$.
\begin{enumerate}[\rm(i)]
\item Suppose $2 \nmid n$. Choose $\ell_2 = \ppd(p,nf)$ if $(n,q) \neq (3,4)$ and $\ell_2 = 7$ otherwise. Then, for any elements 
$x \in T_1$ of order divisible by $\ell_1$ and $y \in T_2$ of order divisible by $\ell_2$ we have $x^G \cdot y^G \supseteq G \smallsetminus \ZB(G)$. 
\item Suppose $2|n \geq 4$. Choose $s_1 = \ell_2=\ppd(p,nf)$ if  $(n,q) \neq (6,2)$ and $s_1=3$ otherwise. Furthermore, if $n \geq 6$, choose 
$s_2=\ppd(p,nf/2)$ if  $(n,q) \neq (12,2)$ and $s_2=7$ otherwise. If $n = 4$, choose 
$s_2=\ppd(p,nf/2)$ if  $q$ is not a Mersenne prime, and $s_2=3$ if $q \geq 7$ is a Mersenne prime. Then, for any elements 
$x \in T_1$ of order divisible by $\ell_1$ and $y \in T_2$ of order divisible by $s_1s_2$ we have $x^G \cdot y^G \supseteq G \smallsetminus \ZB(G)$. 
\end{enumerate}
\end{lem}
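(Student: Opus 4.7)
The plan is to prove Lemma \ref{rss-BC} by verifying Frobenius's character formula \eqref{sum1}, along the same lines as the cases that precede it in the paper, but here applied to the symplectic and odd-dimensional orthogonal cases. Concretely, for any $z \in G \smallsetminus \ZB(G)$ we wish to show
$$\Bigl|\sum_{1_G \neq \chi \in \Irr(G)}\frac{\chi(x)\chi(y)\overline{\chi(z)}}{\chi(1)}\Bigr| < 1.$$
The key point is that because $|x|$ is divisible by $\ell_1=\ppd(p,2nf)$, the element $x$ is regular semisimple with $\CB_G(x) \leq T_1$, and by the standard Deligne--Lusztig/block-theoretic restriction (characters of $\ell_1$-defect zero vanish on $\ell_1$-singular elements), the set of $\chi \in \Irr(G)$ with $\chi(x)\neq 0$ is extremely restricted --- essentially characters in Lusztig series whose dual semisimple class meets the centralizer of an $\ell_1$-element. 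A similar restriction holds for $y$ using $\ell_2$ in case (i), or using $s_1$ (and in fact $s_1s_2$) in case (ii), where the auxiliary prime $s_2$ is introduced precisely to shrink the list of possible Lusztig series when $2 \mid n$.

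Next I would invoke \cite[Theorem 2.3]{MSW}, which provides character-ratio bounds of the form $|\chi(z)|/\chi(1) \ll q^{-c}$ for regular semisimple $z$ and for the $\chi$ not vanishing on our torus elements, together with the bound $|\chi(x)|,|\chi(y)| \leq |\mathrm{W}(\CB_G(x))|$ for semisimple Lusztig characters. Combined with the Cauchy--Schwarz estimate
$$\Bigl|\sum \frac{\chi(x)\chi(y)\overline{\chi(z)}}{\chi(1)}\Bigr| \leq \Bigl(\sum|\chi(x)|^2\Bigr)^{1/2}\Bigl(\sum\frac{|\chi(y)\chi(z)|^2}{\chi(1)^2}\Bigr)^{1/2} \leq \frac{|\CB_G(x)|^{1/2}|\CB_G(y)|^{1/2}}{\chi(1)_{\min}},$$
where the minimum runs over degrees of the relevant non-trivial $\chi$, and using that $|\CB_G(x)| \leq q^n+1$ and $|\CB_G(y)| \leq q^n-1$ are close to $q^n$, while the smallest nontrivial relevant degree grows like $q^{2n-O(1)}$, one obtains the required bound $<1$ in all but finitely many exceptions.

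The main obstacle, as is typical for this kind of result, is the handling of the small exceptional pairs: exactly those $(n,q)$ for which the relevant Zsygmondy primitive prime divisor either fails to exist or is too small --- namely $(n,q) = (3,4), (6,2), (12,2)$, and the cases where $q$ is a Mersenne prime with $n=4$. These are the reasons for replacing $\ell_2$ by $7$ in (i), replacing $s_1$ by $3$ in (ii) for $(n,q)=(6,2)$, and $s_2$ by $7$ or $3$ respectively in (ii). For each such exceptional pair the required inclusion $x^G \cdot y^G \supseteq G \smallsetminus \ZB(G)$ must be verified by hand, which in practice means a direct character-table calculation in \cite{GAP} or {\sf Chevie}, choosing the substitute auxiliary prime so that the very short list of characters surviving both vanishing criteria still allows the Cauchy--Schwarz estimate to close.

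Finally, the structural content is already present in \cite[\S2.2.3]{GT}: one can formulate the lemma as a direct specialization of the results stated there, so the actual write-up would consist of carefully matching the torus choices $T_1, T_2$ and the primitive prime divisor conditions of the present lemma with the setup of \cite{GT} (which treats $\SL_n(q)$, $\SU_n(q)$, $\Sp_{2n}(q)$, $\mathrm{Spin}_{2n+1}(q)$, etc. in parallel), and then citing \cite[Theorem 2.3]{MSW} for the character-value input and \cite{Atlas}/\cite{GAP} for the exceptional cases.
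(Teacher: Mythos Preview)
The paper does not prove this lemma at all: it is introduced with the sentence ``We will need the following result which is essentially recorded in \cite[\S2.2.3]{GT}, whose proof relies on \cite[Theorem 2.3]{MSW},'' and is then simply used. Your final paragraph correctly identifies this, so in that sense your proposal and the paper agree; the paper just cites the result rather than rederiving it.

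That said, your attempted independent sketch has a genuine gap. The displayed Cauchy--Schwarz step
\[
\Bigl|\sum \frac{\chi(x)\chi(y)\overline{\chi(z)}}{\chi(1)}\Bigr| \;\leq\; \frac{|\CB_G(x)|^{1/2}|\CB_G(y)|^{1/2}}{\chi(1)_{\min}}
\]
does not follow from what you wrote and is not strong enough for arbitrary non-central $z$: you have silently replaced $z$ by $y$ in the centralizer bound, whereas $|\CB_G(z)|$ can be as large as the centralizer of a transvection or a root element, of order roughly $q^{\dim G - O(n)}$. The actual argument in \cite{GT}, following \cite{MSW}, does not proceed by a blanket Cauchy--Schwarz estimate over all $\chi$; rather, one first pins down \emph{exactly} the (very short) list of irreducible characters $\chi$ with $\chi(x)\chi(y)\neq 0$ --- typically only $1_G$, $\St$, and a handful of unipotent or semisimple characters determined by the torus types of $T_1,T_2$ --- and then bounds each surviving term $|\chi(z)|/\chi(1)$ individually, using that $|\chi(x)|,|\chi(y)|$ are bounded absolutely (often $=1$). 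The auxiliary prime $s_2$ in part (ii) is needed precisely to kill one extra surviving character when $2\mid n$, not merely to ``shrink'' a list that is then estimated in bulk. If you want to give a self-contained argument rather than cite \cite{GT}, this explicit enumeration of surviving characters is the step you are missing.
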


First we consider the case where $G = \mathrm{Spin}_7(q)$ with $2 \nmid q$, or $G = \Sp_6(q)$ with $q >2$. Then for the elements $x,y \in G$ constructed in 
Lemma \ref{rss-BC}(i) we have $x^Gy^G = G \setminus \ZB(G)$. In particular, we can choose $x$ and 
$y$ so that $xy$ has order $(q^2+1)/\gcd(2,q-1)$ (indeed, $G/\ZB(G)$ contains a subgroup isomorphic to $\PSp_4(q) > \PSp_2(q^2)$).  
Assuming the contrary that $\langle x,y \rangle \leq M < G$ for a maximal subgroup $M$ of $G$, we will use \cite[Tables 8.39, 8.40]{BHR},
respectively \cite[Tables 8.28, 8.29]{BHR} to check possible candidates for $M$. Note that $|xy|$ is odd and $\geq 25$ if $q \geq 7$, 
$|xy|=5$ if $q=3$, and $\ell_2 = 31$ if $q = 5$. This allows us to show $M \not\cong \SL_2(13)$ when $2 \nmid q$. All other candidates for 
$M$ are ruled out by considering the orders of $x$, $y$, and $xy$, using in particular that $\max(\ell_1,\ell_2) \geq 13$.

\medskip
Now we assume $n \geq 4$, $(n,q) \neq (4,3)$,  and handle the cases where $G =\Omega_{2n+1}(q)$ with $2 \nmid q$, or
$G = \Sp_{2n}(q)$. We will construct the elements $x$ and $y$ using Lemma \ref{rss-BC} (but working in $\Omega_{2n+1}(q)$ instead of
$\mathrm{Spin}_{2n+1}(q)$ in the former case).  If $2 \nmid n$, then we choose 
$x \in T_1$ of order $r=\ell_1$ and $y \in T_2$ of order $s=\ell_2$ as in Lemma \ref{rss-BC}(i). If $2|n$, then we choose 
$x \in T_1$ of order $r$ the part of $q^n+1$ that is coprime to $2n$  (so divisible by $\ell_1$ and coprime to $2n(q-1)$), and $y \in T_2$ of order $s=s_1s_2$ 
as in Lemma \ref{rss-BC}(ii). By our construction, 
\begin{equation}\label{r10}
  \begin{aligned}
  & \ell_1 = \ppd(p,2nf) \geq 2nf+1 \geq 11,\\ & \ell_2 = \ppd(p,nf) \geq nf+1 \mbox{ if }(n,q) \neq (6,2),\\
  & \max(\ell_1,\ell_2) \geq 4n+1 \geq 23 \mbox{ if }2 \nmid n,\\
  & r \geq 4n+1 \geq 17 \mbox{ unless }(n,q)=(9,2). \end{aligned}
\end{equation}
(Indeed, the first two claims are given by the choice of $\ell_i$. For the third claim, note that, since $2nf|(\ell_1-1)$, either $\ell_1 \geq 4n+1 \geq 17$, in which case
we are done, or $\ell_1=2n+1$. Consider the latter case. If $2 \nmid n$, then $n|(\ell_2-1)$ but $\ell_2 \neq n+1$, $3n+1$ (by parity) and $\ell_2 \neq \ell_1$, so 
$\ell_2 \geq 4n+1$. Also, by \cite{F}, we have $r \geq 4n+1$, unless $(n,f) = (5,2)$, $(6,2)$, and $(9,2)$, in which cases we have $r=33$, $65$,
and $19$, respectively.)

 Let $V = \F_q^{d}$ with $d=2n+1$, respectively $d=2n$, be the natural module for $G$, considered with the corresponding 
bilinear form $(\cdot,\cdot)$. As $x^Gy^G  \supseteq G \setminus \ZB(G)$,
we can choose $x$ and $y$ such that $xy$ is a regular unipotent element, so that it acts with a single Jordan block of size $d$ and a one-dimensional
fixed point subspace $\langle v_1 \rangle_{\F_q}$ on $V$; in particular, $(v_1,v_1)=0$. 

\smallskip
It remains to show that $S=\langle u,v \rangle$. 
Assume the contrary: $H:=\langle x,y\rangle$ is contained in a maximal subgroup $M$ of $\SL_d(q)$. By its choice, $x$ has 
the $\ppd(d,q;e)$-property, with $e=d$, respectively $e=d-1$. Hence we can apply \cite[Main Theorem]{GPPS} to see that $M$ is in one 
of the Examples 2.1--2.9 described therein. 

\smallskip
Suppose $M$ is in (2.1). Since $\ell_1=\ppd(p,ef)$, we must have $q_0=q$. Certainly, $M$ cannot contain $\SL_d(q)$. If $M \rhd \Sp_d(q)$, then
$G=\Sp_d(q)$ and hence $M=G$, a contradiction. Suppose $M \rhd \Omega^\eps_d(q)$. Then, since $M < G$, we must have
that $G=\Sp_d(q)$ with $d=2n$, $2|q$, and  $M=\GO^-_{d}(q) = \Omega^-_{d}(q) \cdot 2$. Note that both $x$ and $y$ have odd orders,
so $H=\OB^2(H) \leq \OB^2(M) = \Omega^-_{d}(q)$. But then the element $xy$ which has quasi-determinant $-1$ cannot lie inside
$\Omega^-_{12}(q)$, again a contradiction.
The remaining possibility is that
$\SU_d(\sqrt{q}) \lhd M \leq \mathrm{CU}_d(\sqrt{q})$ if $2|f$. However, if $2 \nmid d$, then $d=2n+1$, and $\ppd(p,(2n+1)f)$ divides
$|\SU_d(\sqrt{q})|$ but not $|G|$. If $2 \mid d$, then $d=2n$, and $\ell_1=\ppd(p,2nf)$ divides $|M|$ but not 
$|\mathrm{CU}_d(\sqrt{q})|$. 

\smallskip
Suppose $M$ is in (2.2), i.e. $M$ is reducible on $V$. If $d=2n$, then $x$ is irreducible on $V$. Hence $d=2n+1$, in which case 
the $\langle x \rangle$-module $V$ is an orthogonal sum of two irreducible submodules: a non-degenerate subspace $\langle v_2 \rangle_{\F_q}$ and its $2n$-dimensional orthogonal complement. It follows that $M$ fixes $\langle v_2 \rangle_{\F_q}$, and hence $xy$ fixes $v_2$. But this is 
a contradiction, since the fixed point subspace for $xy$ is spanned by the singular vector $e_1$. 

\smallskip
Suppose $M$ is in (2.3). Then $\ell_1 = e+1 \leq d$, whence $d=2n+1=\ell_1$, $2 \nmid q$, and $M \leq \GL_1(q) \wr \SSS_d$. Now if 
$2 \nmid n$, then $\ell_2 > d$ by \eqref{r10} and $\ell_2 \nmid (q-1)$, and so
$\ell_2 \nmid |M|$,  a contradiction. Suppose $2|n$, whence $n \geq 6$ (as $\ell_1=2n+1$). Now $d=\ell_1$ is prime, and the only element order divisible
by $\ell_1$ in $\SSS_d$ is $\ell_1$, so $x^{\ell_1} \in \GL_1(q)^d$ and thus $x^{\ell_1(q-1)}=1$. But $|x|=r$ is coprime to $q-1$, so $x^{\ell_1}=1$ and thus $r=\ell_1=2n+1$, contrary to \eqref{r10}.

\smallskip
Suppose $M$ is in (2.4), so that $M \leq \GL_{d/b}(q^b) \cdot b$ for some $1 < b|d$. Here, the first possibility is that $\ell_1=d=b=e+1$, whence 
$d=2n+1$, $2 \nmid q$, and $M \leq \GL_1(q^d) \cdot d$. On the other hand, $s=|y|$ has a prime divisor $\ell_2 \neq \ell_1$ that divides
$q^n-1$ and so coprime to $q^d-1$, a contradiction. Hence we are in the second possibility: $b>1$ divides $\gcd(e,d)$. As $d-1 \leq e \leq d$,
this implies that $e=d=2n$. We claim that in this case 
\begin{equation}\label{xy1}
  xy \in \GL_{d/b}(q^b).
\end{equation}  
This is certainly the case if $|x|$ and $|y|$ are both coprime to $b$,
as this ensures that $H \leq \GL_{d/b}(q^b)$.
Now, by our choice, $|x|=r$ is coprime to $d$. If $2 \nmid n$, then $|y|=\ell_2 > n$ is again coprime to $d$. If $2|n$, then the only 
case that $|y|=s$ is not coprime to $d$ is when $(n,q,s)=(6,2,21)$. In this exception, using \cite[Table 8.80]{BHR}, we can check that 
$M = \Sp_6(4) \cdot 2$ (in which case $H = \OB^2(H) \leq \Sp_6(4) < \GL_{d/b}(q^b)$), or 
$M = \Sp_4(8) \cdot  3$ (in which case the $2$-element $xy$ belongs to $\Sp_4(8) < \GL_{d/b}(q^b)$). We have therefore proved \eqref{xy1},
which implies that the $p$-element $xy$ acts on $V=\F_q^d$ with every Jordan block repeating $b$ times. But this is a contradiction since 
$xy$ is regular unipotent.

\smallskip
Suppose $M$ is in (2.5). Then $2n=d=2^{m+1}$ with $m \geq 2$. Furthermore, $p>2$, and  $\ell_1=d+1$ is a Fermat prime, which implies 
that $m \geq 3$ is odd,  whence $d-1=(2^{(m+1)/2}-1)(2^{(m+1)/2}+1)$ and $3|(2^m+1)$.
The structure of $M$ then shows that any odd prime divisor of $|M|$ is either $\ell_1$, a divisor of $q-1$, or 
at most $2^m-1=n-1$. On the other hand, $|y|$ has a prime divisor 
$\ell_2 = \ppd(p,nf) \geq n+1$, a contradiction.

\medskip
All the remaining candidates for $M$ are almost quasisimple; in particular, $L:=M^{(\infty)}$ is quasisimple acting absolutely irreducibly on $V$.
Suppose $M$ is in (2.6). Since $\ell_1 \geq 11$ by \eqref{r10}, we have that $L = \AAA_m$ acting on $V$ via its deleted permutation module
and $M \leq (C_{q-1} \times L) \cdot 2$.
Here, $d \in \{m-2,m-1\}$ and all prime divisors of $M$ that are coprime to $q-1$ are at most $m$. In particular, 
$\ell_1 \leq m \leq d+2 \leq 2n+3$. Using \eqref{r10}, we deduce that $\ell_1=2n+1$ and $f=1$; in particular, $n > 4$. 
Now, if $2 \nmid n$,  then $\ell_2 \geq 4n+1$ by \eqref{r10} and hence cannot divide $|M|$. Hence $2|n \geq 6$. Note that the only element order divisible 
by $\ell_1$ in $\SSS_m$ is $\ell_1$. It follows that $x^{\ell_1} \in C_{q-1}$, and hence $x^{\ell_1(q-1)}=1$. But $(r,q-1)=1$, so $x^{\ell_1}=1$ and 
hence $r=\ell_1=2n+1$, which contradicts \eqref{r10}.

\smallskip
Suppose $M$ is in (2.7). 
Since $d-1 \leq e=2n \geq 8$, from \cite[Table  5]{GPPS} we see that $(e,d,\ell_1) = (10,10,11)$, $(10,11,11)$, $(12,12,13)$, $(18,18,19)$, 
$(22,22,23)$, $(22,23,23)$, or $(28,28,29)$. For $e=10$, i.e. $n=5$, we have $\ell_2 \geq 21$ and $5|(\ell_2-1)$ by \eqref{r10}, so $\ell_2 \geq 31$. 
For $e=18$, i.e. $n=9$, we have $\ell_2 \geq 37$ by \eqref{r10}. For $e=22$, i.e. $n=11$, we have $\ell_2 \geq 45$ and $11|(\ell_2-1)$ by \eqref{r10}, so $\ell_2 \geq 67$. In all these three cases, $\ell_2 \nmid (q-1)$ by its definition, and we can then check that $\ell_2$ does not divide $|M|$ for any of the groups in
\cite[Table 5]{GPPS}. So $d=e=12$ or $28$. If moreover $e=28$, i.e. $n=14$, then as $14|(\ell_2-1)$ and $\ell_2 \neq 29$, we have $\ell_2 \geq 43$ and hence cannot divide $|M|$. In the case $d=e=12$ we have $L= 6 \cdot \mathrm{Suz}$ and $p>2$, so $|y|=s=s_1s_2$ with 
$s_1=\ppd(p,6f) \geq 7$ and $s_2=\ppd(p,3f) \geq 7$, both different from $\ell_1=\ppd(p,12f)$. It follows that at least one of $s_1,s_2$ is 
$\geq 19$ and hence does not divide $|M$, again a contradiction.

\smallskip
Suppose we are in (2.8). Since $e =2n \geq d-1$, we see from \cite[Table 6]{GPPS} that $e \leq 6$, which is impossible since $n \geq 4$.

\smallskip
Finally, suppose $M$ is in (2.9). Since $d-1 \leq e=2n \geq 8$, for the groups in 
\cite[Table  7]{GPPS} we see that $(e,d,\ell_1) = (12,12,13)$, $p>2$, and $L=2 \cdot G_2(4)$.
As in the previous case, we now see that at least one of $s_1,s_2$ is 
$\geq 19$ and hence does not divide $|M$, a contradiction.

Next we consider the possibilities for $L$ listed in \cite[Table 8]{GPPS}. We will use the upper bounds for $\meo(L)$, the maximum order of elements 
$h \in \Aut(L)$, listed in \cite[Table 3]{GMPS}. The construction of our elements $x$ and $y$ ensures that their order is the same order in 
$\PGL(V)$, and hence in $\Aut(L)$ as $L$ acts absolutely irreducibly on $V$.
The first case is $L/\ZB(L)=\PSL_m(t)$ with $m \geq 3$ a prime and 
$e+1=\ell_1= (t^m-1)/(t-1)$.  Here, $\meo(L)=\ell_1=2n+1$, so by \eqref{r10} we must have $r=\ell_1$, and hence $(n,q) = (9,2)$ and 
$(t^m-t)/(t-1)=18$, which is impossible. 

The second case is $L/\ZB(L)=\PSU_m(t)$ with $m \geq 3$ a prime and 
$e+1=\ell_1= (t^m+1)/(t+1)$. Note that $e=(t^m-t)/(t+1) \neq 18$. Now, if $L \neq \SU_5(2)$, then $\meo(L) < 2\ell_1$ by \cite{GMPS}, and so $r=\ell_1$, 
contrary to \eqref{r10}. If $L=\SU_5(2)$, then $n=5$ and $|y|=\ell_2=\ppd(p,5f)$ is $\geq 31$ (as $\ell_2 \neq \ell_1=11)$, and
so $\ell_2 \nmid |M|$, again a contradiction. 

The third family is $L/\ZB(L)=\PSp_{2m}(t)$, where either $\ell_1=(t^m+1)/2$ and $2 \nmid t$, or $\ell_1=(3^m-1)/2$, $t=3$, and $m$ is a prime. 
It is easy to see that $t=t_0^{2^a}$ for a prime $t_0$ in the first case. As $H=\OB^2(H)$, in either case we have $H \leq \OB^2(M) \leq \ZB(M)L$
(as $\Out(L)$ is a $2$-group). Now we can check that the element $x$ of order $\ell_1$ generates a maximal torus in $L/\ZB(L)$,
and hence $r=\ell_1=e+1=2n+1$, which implies 
$(n,q) = (9,2)$ by \eqref{r10}. As $18=2n=e=\ell_1-1$, this cannot happen for the indicated $\ell_1=(t^m \pm 1)/2$.

In the remaining families we have $L/\ZB(L)= \PSL_2(t)$ with $t \geq 7$. Since $\ell_1 \geq 11$ by \eqref{r10}, in fact we have $t \geq 11$, and 
$\meo(L) = t+1$ by \cite{GMPS}. If $d \in \{t,t\pm 1\}$, then $\meo(L) \leq d+2 \leq 2n+3$, whereas when $2 \nmid n$ some $\ell_i \geq 4n+1$ by \eqref{r10}, a contradiction. If $2|n$, then $r \geq 4n+1$ by \eqref{r10}, a contradiction as well. Thus $d = (t \pm 1)/2$, and so
$\meo(L) = t+1 \leq 2d+2 \leq 4n+4$. If $(n,q) = (9,2)$ in addition, then we can choose $\ell_2 = 73$ which then does not divide $|M|$. So $(n,q) \neq (9,2)$, 
whence $r \geq 4n+1$ by \eqref{r10}. As $e=2n$ and $r$ is an odd multiple of $\ell_1$ which is $2n+1$,
$4n+1$ or $\geq 6n+1$, it follows that $r=\ell_1 =4n+1=2e+1$. Applying \cite[Theorem 3.2.2]{Tr}, we see that
$(q,n)$ is one of $(2,4)$, $(2,5)$, $(2,6)$, $(2,9)$ (which is already excluded), $(2,10)$, or $(3,9)$. However, by our choice $r$ is $33$, respectively $65$, and $4921$, if $(q,n)=(2,5)$, $(2,6)$, or $(3,9)$, respectively. If $(n,q) = (2,10)$, then $s=351$ by our construction. The only remaining case is $(n,q) = (4,2)$, 
i.e. $G = \Sp_8(2)$, $r=\ell_1=17$,  $s=15$, and $|xy|=8$. Checking the maximal subgroups with socle $\PSL_2(t)$ of $G$ \cite{GAP}, we see that 
$M = \PSL_2(17)$ whose order is however coprime to $5$, the final contradiction.

\section{Proof of Theorems \ref{main1}  and \ref{main2}: Even-dimensional orthogonal groups}
\subsection{The rank $4$ groups}

Assume $G = \mathrm{Spin}^+_8(q)$ with $q \geq 4$. By \cite[Lemma 2.4]{GT}, we can choose $x$ of prime order
$r=\ell_1=\ppd(p,6f)$, and $y$ of prime order $s=\ell_2=\ppd(q,3)$ such that $x^Gy^G = G \setminus \ZB(G)$. In particular, we can choose $x$ and 
$y$ so that $xy$ has order $(q^4-1)_{2'}$ if both $q \pm 1$ are not $2$-powers, and $(q^4-1)/\gcd(2,q-1)$ otherwise 
(indeed, $\SO^+_8(q) > \SO^+_2(q^4) \cong C_{q^4-1}$, and 
$\Omega^+_8(q)$ has index $\gcd(2,q-1)$ in $\SO^+_8(q)$).  Assuming the contrary that $\langle u,v \rangle \leq M < S=G/\ZB(G)$ for a maximal subgroup $M$ of $S$, we will use \cite[Table 8.50]{BHR} to check possible candidates for $M$. Note that $\max(\ell_1,\ell_2) \geq 13$, in fact
$\ell_2 \geq 31$ if $q=5$, and this rules out all  possibilities of $M$ with composition factor $\AAA_{8,9,10}$, $\Omega^+_8(2)$, or
$\tw2 B_2(8)$. Using the fact that $|M|$ is divisible by $\ell_1\ell_2$, we can rule out all other possibilities for $M$, except
for $M \cong \Omega_7(q)$ if $2 \nmid q$, and $M \cong \Sp_6(q)$ or $G_2(q)$ when $2|q$. However, none of these two groups can contain elements of
order equal to $|uv|$ (because $|uv| \geq (q^4-1)/\gcd(2,q-1)^2 > (q^2+1)(q+1)/\gcd(2,q-1)$), a contradiction. 

This proves Theorem \ref{main1} for
$S = P\Omega^+_8(q)$ for $q \geq 4$, and also Theorem \ref{main2} if in addition none of $q \pm 1$ is a $2$-power. To prove 
Theorem \ref{main2} for the remaining values of $q \geq 5$, we work with $L = \Omega^+_8(q) = \Omega(V)$ where $V = \F_q^8$. We again choose 
$x \in L$ of order $r=\ell_1=\ppd(p,6f)$ as before, but for $y$ we choose a regular semisimple element of order $s=(q^2+1)/2 \geq 13$ in the torus $T_2$ 
considered in \cite[\S2.5]{MSW}; in particular, $y$ has no nonzero fixed point on $V$ (such a $y$ exists since $q \geq 5$). 
As shown in the proof of \cite[Theorem 2.7]{MSW}, there are only three characters of $L$ that are non-vanishing 
at both $x$ and $y$: $1_L$, $\St_L$ of degree $q^{12}$, and another one, $\gamma$, of degree $q^3(q^3-1)(q-1)^3/2$. They all take values $\pm 1$
at $x$ and $y$, except that $|\gamma(y)|=2$. Now we choose $z \in L$ of order $(q^2+1)/2$ which has a $4$-dimensional fixed point subspace on
$V$, so that $|\CB_L(y)|$ divides $q^2(q^2+1)(q^4-1)$, which implies $|\gamma(z)| < q^2(q^2+1)$ and $|\St_L(z)| = q^2$. It follows from \eqref{sum1}
that $z \in x^Ly^L$, so without loss we may assume that $z=xy$. Again using \cite[Table 8.50]{BHR}, we can check that any maximal subgroup of 
$L$ that contains elements of both orders $r$ and $s$ contains a unique conjugacy class of cyclic subgroups of order $s$. However, $\langle y \rangle$
and $\langle xy \rangle$ are not conjugate in $L$. It follows that $L=\langle x,y \rangle$. Since $S = L/\ZB(L)$ with $|\ZB(L)|=2$, this proves 
Theorem \ref{main2} for $S$. 

\smallskip
Assume $G = 2 \cdot \Omega^+_8(2)$. Choosing $x$ from class $7a$ (number $51$) of order
$r=7$, and $y$ from class $12e$ (number $66$), and $z$ from class $5b$ (number $29$) in the notation \cite{GAP}, we can check that 
$z \in x^Gy^G$. Conjugating $x$ and $y$ suitably, we may assume that $z=xy$. We also note that 
only three characters of $S = G/\ZB(G)$ (of degree $1$, $50$, $300$) can be non-vanishing at both $x$ and $y$, and the sum in \eqref{sum1} has absolute value at
most $18/50+60/300 < 1$. Hence $u^Sv^S = S \setminus \{1\}$ for $u=x\ZB(G)$ and $v=y\ZB(G)$. As elements of $S$, $u$ belongs to class $7a$,
$v$ belongs to class $12a$, and $uv$ belongs to class $5b$ in the notation of \cite{GAP} (so $|uv|=5$).  Assume the contrary that $\langle u,v \rangle \leq M < S$ for a maximal subgroup $M$ of $S$. As $M$ contains $u$ of order $7$ and $v$ from class $12a$, $M$ is conjugate to one of three maximal subgroups of 
type $2^6 \rtimes \AAA_8$. However, none 
of these three subgroups can intersect both classes $12a$ and $5b$ at the same time, as can be seen using \cite{GAP}. Hence 
$S=\langle u,v \rangle$, and we have proved Theorems \ref{main1} and \ref{main2} for $S$.

\smallskip
Assume $S = P\Omega^+_8(3)$. Choosing $u$ of order
$r=13$ from class $13a$ in the notation of \cite{GAP}, and $v$ of order $s=5$ from class $5a$ (number $24$) in the notation \cite{GAP}, we can check that 
only theee characters (of degree $1$, $24192$, $3^{12}$) can be non-vanishing at both $u$ and $v$, and the sum in \eqref{sum1} has absolute value at
most $1/81+2/7<1$. Hence $u^Sv^S = S \setminus \{1\}$. In particular, we can choose $u$ and 
$v$ so that $uv$ belongs to class $18b$ (number $107$) in the notation of \cite{GAP} (so $|uv|=18$).  Assume the contrary that $\langle u,v \rangle \leq M < S$ for a maximal subgroup $M$ of $G$. As $M$ contains $u$ of order $13$ and $uv$ of order $18$, $M$ is conjugate to one of six maximal subgroups of 
type $\Omega_7(3)$. However, none 
of these six subgroups can intersect both classes $5a$ and $18b$ at the same time, as can be seen using \cite{GAP}. Thus we have proved Theorem \ref{main1} 
for $S = P\Omega^+_8(3)$. To prove Theorem \ref{main2} for $S$, we work inside $L=\Omega^+_8(3)$, and choose $x \in L$ from class $13a$ (number 159) and $y \in L$ from class $5a$ (number 41) in the notation of \cite{GAP}, so that $u'=x\ZB(L)$ and $v' =y\ZB(L)$ are conjugate in $S$ to the aforementioned elements $u,v$ of $S=L/\ZB(L)$. Now choosing $z \in L$ from class $5b$ (number 43) and using \cite{GAP}, we can check that $z \in x'^Ly^L$, whence
we may assume $z=xy$. We can also check using \cite{Atlas} that if $M < S$ is a maximal subgroup that contains elements of both orders $13$ and $5$ then 
$M$ has a unique conjugacy class of Sylow $5$-subgroups (of order $5$). However, by our choice $\langle y\ZB(L) \rangle$ and $\langle xy\ZB(L) \rangle$ are not 
conjugate in $S$. It follows that $L = \langle x,y \rangle$, and hence we have proved Theorem \ref{main2} for $S$.

\smallskip
Assume $G = \Omega^-_8(2)$. Choosing $x$ of order
$\ell_1=17$, and $y$ of order $\ell_2=7$ and using \cite{GAP}, we can check that $x^Gy^G = G \setminus \{1\}$. In particular, we can choose $x$ and 
$y$ so that $|xy|=2$. Since no maximal subgroup of $G$ has order divisible by $17 \cdot 7$ \cite{Atlas}, $G = \langle x,y \rangle$.

\smallskip
Assume $G = \mathrm{Spin}^-_8(q)$ with $q \geq 3$. By \cite[\S2.2.4]{GT}, we can choose $x$ of prime order
$r=\ell_1=\ppd(p,8f)$, and $y$ of prime order $s=\ell_2=\ppd(p,6f)$ such that $x^Gy^G = G \setminus \ZB(G)$. In particular, we can choose $x$ and 
$y$ so that $xy$ has order $\ell_3=\ppd(q,3)$.  According to \cite[Tables 8.52, 8.53]{BHR}, no maximal subgroup of 
$S = G/\ZB(G)$ can have order divisible by $\ell_1\ell_2\ell_3$. Hence $G = \langle x,y \rangle$. 

\subsection{The general case}
We will need the following result recorded in \cite{GT}.

\begin{lem}{\rm (\cite[\S2.2.4 and Theorem 2.7]{GT})} \label{rss-D}
Let $q=p^f$ and $n \geq 5$. 
\begin{enumerate}[\rm(i)]
\item Suppose $2 \nmid n$.  Let $G = \mathrm{Spin}^+_{2n}(q)$, 
$T_1 < G$ be a maximal torus of order $(q^{n-1}+1)(q+1)$, $T_2 < G$ a maximal torus of order $q^n-1$. Then for any elements 
$x \in T_1$ of order divisible by $\ell_1=\ppd(p,2(n-1)f)$ and $y \in T_2$ of order divisible by $\ell_2=\ppd(p,nf)$ we have $x^G \cdot y^G \supseteq G \smallsetminus \ZB(G)$. 
\item Suppose $2|n$.  Let $G = \mathrm{Spin}^+_{2n}(q)$, 
$T_1 < G$ be a maximal torus of order $(q^{n-1}+1)(q+1)$, $T_2 < G$ a maximal torus of order $(q^{n-1}-1)(q-1)$. Then for any elements 
$x \in T_1$ of order divisible by $\ell_1=\ppd(p,2(n-1)f)$ and $y \in T_2$ of order divisible by $\ell_2=\ppd(p,(n-1)f)$ if $(n,q) \neq (4,4)$ and $\ell_2 =7$ otherwise, 
we have $x^G \cdot y^G \supseteq G \smallsetminus \ZB(G)$. 
\item Let $G = \mathrm{Spin}^-_{2n}(q)$, 
$T_1 < G$ be a maximal torus of order $q^n+1$, $T_2 < G$ a maximal torus of order $(q^{n-1}+1)(q-1)$. Then for any elements 
$x \in T_1$ of order divisible by $\ell_1=\ppd(p,2nf)$ and $y \in T_2$ of order divisible by $\ell_2=\ppd(p,2(n-1)f)$ we have $x^G \cdot y^G \supseteq G \smallsetminus \ZB(G)$. 
\end{enumerate}
\end{lem}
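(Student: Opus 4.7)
The plan is to deduce all three parts from Frobenius' character formula, using the sufficient criterion \eqref{sum1}: it is enough to show that for every $z \in G \smallsetminus \ZB(G)$ one has
$$\Bigl|\sum_{1_G \neq \chi \in \Irr(G)}\frac{\chi(x)\chi(y)\overline{\chi(z)}}{\chi(1)}\Bigr| < 1,$$
which then gives $z \in x^G \cdot y^G$. The elements $x,y$ are regular semisimple by construction, since the order of $x$ is divisible by $\ell_1$ and the maximal torus of $G$ whose order is divisible by $\ell_1$ is unique up to conjugacy (namely $T_1$); likewise for $y$ and $T_2$. Hence $\CB_G(x)\subseteq T_1$ and $\CB_G(y)\subseteq T_2$, and $|\chi(x)|,|\chi(y)|$ are bounded by $|W_G(T_i)|$ via Deligne--Lusztig theory.

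The next step would be to drastically cut down the summation. An irreducible character $\chi$ lying in an $\ell_i$-block of defect zero vanishes on every $\ell_i$-singular element, so only characters in the (unique) non-defect-zero $\ell_i$-block contribute at $x$, respectively at $y$. Intersecting these two families yields a short, explicit list of characters $\chi$ that can be simultaneously non-vanishing at both $x$ and $y$: $1_G$, the Steinberg character $\St_G$, and a bounded number of unipotent / near-unipotent characters parametrised by characters of the relative Weyl groups $W_G(T_1)$ and $W_G(T_2)$ (of order dividing $2(n-1)$ or $2n$). For each such $\chi$, I would invoke Landazuri--Seitz type lower bounds $\chi(1) \gg q^{2n-2}$ for non-trivial $\chi$, together with the trivial bound $|\chi(z)| \leq \chi(1)^{1/2}|\CB_G(z)|^{1/2}$ via Cauchy--Schwarz, exactly as is done in the alternating-group and low-rank arguments earlier in the paper.

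The bulk of the estimation then reduces to two terms. The Steinberg summand is $|\St(z)|/q^{\dim G - n}$, which is at most $q^{-1}$ since $z$ is non-central and $n \geq 5$. The remaining characters contribute at most $|W_G(T_1)||W_G(T_2)|\cdot q^{\rank G}/q^{2n-2}$, which decays geometrically in $q$ once $n \geq 5$. Combining gives the required bound $< 1$.

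The main obstacle is uniformity in $z$: when $z$ lies in a maximal parabolic, $|\CB_G(z)|$ can be as large as $q^{(2n-2)^2/4}$, and some of the unipotent summands then become non-negligible. Handling this requires the sharper Deligne--Lusztig bound $|\chi(z)| \leq |W_{\CB_G(z_s)}|\cdot |\CB_G(z_s)^F|_p^{1/2}$ on semisimple parts, not just the Cauchy--Schwarz bound. The single genuine exception $(n,q)=(4,4)$ in part (ii) --- where $\ppd(p,(n-1)f)=\ppd(2,12)$ does not give a suitable prime --- would be handled separately by substituting $\ell_2 = 7$ and verifying the inequality by direct computation in \cite{GAP} on the small group $\mathrm{Spin}^+_8(4)$, as is done for the other exceptional small-rank cases throughout the paper.
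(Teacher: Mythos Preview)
The paper does not prove this lemma at all: it is introduced with ``We will need the following result recorded in \cite{GT}'' and is simply quoted from \cite[\S2.2.4 and Theorem 2.7]{GT} (which in turn builds on \cite{MSW}). So there is no in-paper proof to compare against; the correct ``proof'' here is a citation.

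Your sketch is in the right spirit --- the argument in \cite{GT,MSW} does proceed via the Frobenius formula and by pinning down the short list of characters that are simultaneously non-vanishing on both regular semisimple classes, then bounding each summand --- but several details are off. First, the bound you write as ``$|\chi(z)| \leq \chi(1)^{1/2}|\CB_G(z)|^{1/2}$ via Cauchy--Schwarz'' is not a Cauchy--Schwarz inequality; Cauchy--Schwarz gives $\sum_\chi |\chi(z)|^2 = |\CB_G(z)|$, not a pointwise bound on a single $\chi(z)$. The actual arguments in \cite{GT,MSW} do not use such a bound but instead explicitly identify each contributing $\chi$ and compute or bound $\chi(z)$ directly from Deligne--Lusztig theory. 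Second, your handling of the exception $(n,q)=(4,4)$ is both unnecessary and arithmetically wrong: the lemma assumes $n \geq 5$, so the clause ``if $(n,q)\neq(4,4)$'' is vacuous here (it is a relic of the more general formulation in \cite{GT}); and in any case $(n-1)f = 3\cdot 2 = 6$, not $12$, when $q=4=2^2$ --- the point being that $\ppd(2,6)$ does not exist by the Zsigmondy exception. Third, the heuristic $|W_G(T_1)||W_G(T_2)|\cdot q^{\rank G}/q^{2n-2}$ is too crude to yield the needed inequality uniformly in $z$; the actual proofs work harder, treating unipotent and mixed $z$ separately.
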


Now we assume 
$$n \geq 5,$$  
and complete the case where $G =\Omega(V) = \Omega^\eps_{2n}(q)$, where 
$V = \F_q^{2n}$ is endowed with a $G$-invariant non-degenerate quadratic form $\QBS$ of type $\eps = \pm$. We will construct the elements $x,y \in G$ 
using Lemma \ref{rss-D} but working in $\Omega^\eps_{2n}(q)$ instead of
$\mathrm{Spin}_{2n}^\eps(q)$. 
If $2|q$, $\eps=+$, and $2|n$, we choose $x$ of order $r=q^{n-1}+1$ (so divisible by $\ell_1$), and $y$ of order $s=\ell_2$. 
If $q=2$, $\eps=-$, and $n=6$, we choose $x$ of order $r=q^n+1 = 65$ (so divisible by $\ell_1=13$), and $y$ of order $s=\ell_2$.
In all other cases,
we choose $x$ of order $r$, the (full) $\ell_1$-part of $q^n+1$ when $\eps=-$ and of $q^{n-1}+1$ when $\eps=+$,  and $y$ of order $s=\ell_2$.  
By our construction, 
\begin{equation}\label{r20}
  \begin{aligned}
  & \ell_1 \geq 2nf+1 \geq 11 \mbox{ if }\eps=-, & \ell_1 \geq (2n-2)f+1 \geq 11 \mbox{ if } \eps=+.
\end{aligned}
\end{equation}
Moreover, 
\begin{equation}\label{r21}
  \begin{array}{lll}
   \mbox{If }\eps=+, & \mbox{then } r \geq 6(n-1)+1, & \mbox{unless }q=2 \mbox{ and }n=5,7,11,\\
   & &  \mbox{ or }(q,n)=(3,10),\\
   \mbox{If }\eps=-, & \mbox{then } r \geq 6n+1, & \mbox{unless }q=2 \mbox{ and }n=5,9,10,\\
   & &  \mbox{ or }(q,n)=(3,9).
  \end{array}  
\end{equation}
Indeed, in the case $\eps=+$, since $n \geq 5$, 
by \cite[Theorem 3.2.2]{Tr} we have $r \geq (q^{2n-2}-1)_{\ell_1} \geq 6(n-1)+1$, unless $q=2$ and 
$2n-2 \in \{8,10,12,18,20\}$, 
or $(q,2n-2) = (3,18)$. 
Also, when $(q,n) = (2,6)$, $(2,10)$, by our choice we have $r=33$, $513$. 
Similarly, in the case $\eps=-$, 
by \cite[Theorem 3.2.2]{Tr} we have $r=(q^{2n}-1)_{\ell_1} \geq 6n+1$, unless $q=2$ and 
$2n \in \{10,12,18,20\}$, or $(q,2n) = (3,18)$. Furthermore, $r=65$ when $(q,n) = (2,6)$ by our construction.

Since  $x^Gy^G  \supseteq G \setminus \ZB(G)$, we can 
choose $x$ and $y$ such that $xy$ is a $p$-element that acts on $V$ with two Jordan blocks, of size $2n-3$ and $3$ if $2 \nmid q$, and of size 
$2n-2$ and $2$ if $2|q$.

\smallskip
It remains to show that $S=\langle u,v \rangle$. 
Assume the contrary: $H:=\langle x,y\rangle$ is contained in a maximal subgroup $M$ of $\SL_{2n}(q)$. By its choice, $x$ has 
the $\ppd(2n,q;e)$-property, with $e=2n-2$ if $\eps=+$ and $e=2n$ if $\eps=-$. Hence we can apply \cite[Main Theorem]{GPPS} to see that $M$ is in one 
of the Examples 2.1--2.9 described therein. 

\smallskip
Suppose $M$ is in (2.1). Since $\ell_1=\ppd(p,ef)$, we must have $q_0=q$. Certainly, $M$ cannot contain $\SL_{2n}(q)$, $\Sp_{2n}(q)$, or 
$\Omega^{\pm}_{2n}(q)$. The remaining possibility is that
$\SU_{2n}(\sqrt{q}) \lhd M \leq \mathrm{CU}_{2n}(\sqrt{q})$ if $2|f$. But then $\ppd(p,(2n-1)f)$ divides $|M|$ but not $|G|$, a contradiction.

\smallskip
Suppose $M$ is in (2.2), i.e. $M$ is reducible on $V$, say with a nonzero submodule $A \neq V$. 
If $\eps=-$ then $x$ is irreducible on $V$. Hence $\eps=+$, in which case 
the $\langle x \rangle$-module $V$ is an orthogonal sum of submodules: a non-degenerate subspace $U = \F_q^2$ of type $-$ (with respect
to $\QBS$) and its $(2n-2)$-dimensional orthogonal complement $U^\perp$ which is irreducible over $\langle x \rangle$. Replacing $A$ by $A^\perp$ if necessary, we may assume that $A^\perp \supseteq U^\perp$,
and hence $A \subseteq U$. If $2 \nmid q$, then, as $U$ is of type $-$, $A$ contains no (nonzero) singular vector, and so 
$A \cap A^\perp = 0$ and $V = A \oplus A^\perp$. In this case, $xy$ has a Jordan block of size $\dim(A) \leq 2$, contrary to the choice
of $x$ and $y$.  Hence $2|q$, in which case $x$ acts irreducibly on $U$ and so $A=U$. But then $y$ fixes $U = \F_q^2$ of type $-$, which is impossible
by the choice of $y$: if $2 \nmid n$ then $V|_{\langle y \rangle}$ is the sum of two irreducible submodules of dimension $n$, and if $2|n$ then 
 $V|_{\langle y \rangle}$ is the sum of two irreducible submodules of dimension $n-1$ and a $2$-dimensional submodule which is of type $+$.

\smallskip
Suppose $M$ is in (2.3). Then $\ell_1 = e+1 \leq 2n$, whence $\eps=+$, $\ell_1=2n-1$, $f=1$ and $M \leq \GL_1(q) \wr \SSS_{2n}$. Now if 
$2 \nmid n$, then $\ell_2 = \ppd(p,nf) \geq 2n+1$ and $\ell_2 \nmid (q-1)$, and so
$\ell_2 \nmid |M|$,  a contradiction. Suppose $2|n$, whence $n \geq 6$.
Then $\ell_2 = \ppd(p,n-1)$ is congruent to
$1$ module $n-1$ and different from $n$ and $3n-2$ (by parity), and from $2n-1=\ell_1$. Thus $\ell_2 \geq 4n-3$ and coprime to $q-1$,
so does not divide $|M$, again a contradiction.
 
\smallskip
Suppose $M$ is in (2.4), so that $M \leq \GL_{2n/b}(q^b) \cdot b$ for some $1 < b|2n$. Here, the first possibility is that $\ell_1=2n=b=e+1$, which is absurd.
 Hence we are in the second possibility: $b>1$ divides $\gcd(e,2n)$. Note that
\begin{equation}\label{xy2}
  xy \in \GL_{2n/b}(q^b).
\end{equation}  
(Indeed, this is certainly the case if $|x|$ and $|y|$ are both coprime to $b$,
as this ensures that $H \leq \GL_{2n/b}(q^b)$. Now, if $\eps=+$, then $e=2n-2$ and hence $b=2$, whereas by our choice $|x|$ and $|y|$ are odd.
If $\eps=-$, then $\ell_1 > 2n$ and $\ell_2 \geq 2n-1$, whereas $b=2n$ or $b \leq n$.)
Now \eqref{xy2} implies that the $p$-element $xy$ acts on $V=\F_q^{2n}$ with every Jordan block repeating $b$ times. But this contradicts the choice of 
$xy$.

\smallskip
Suppose $M$ is in (2.5). Then $2n=2^{m+1}$ with $m \geq 3$. Furthermore, $p>2$, and either $\ell_1=2n+1$ is a Fermat prime and $\eps=-$, or 
$\ell_1 =2n-1$ is a Mersenne prime and $\eps=+$. In the first case, $m \geq 3$ is odd, so $2n-1=(2^{(m+1)/2}-1)(2^{(m+1)/2}+1)$.
The structure of $M$ then shows that any odd prime divisor of $|M|$ is either $\ell_1$, a divisor of $q-1$, or 
at most $2^m+1=n+1$. On the other hand, $|y|=\ell_2 = \ppd(p,2(n-1)f) \geq 2n-1$, a contradiction. In the second case, $f=1$ and $m$ is even, so 
$3|(2n+1)$.
The structure of $M$ then shows that any odd prime divisor of $|M|$ is either $\ell_1$, a divisor of $q-1$, or 
at most $2^m+1=n+1$. On the other hand, as $n > 4$,
$\ell_2 = \ppd(p,n-1) \geq 2n-1$, again a contradiction.

\medskip
All the remaining candidates for $M$ are almost quasisimple; in particular, $L:=M^{(\infty)}$ is quasisimple acting absolutely irreducibly on $V$.
Suppose $M$ is in (2.6). Since $\ell_1 \geq 11$ by \eqref{r10}, we have that $L = \AAA_m$ acting on $V$ via its deleted permutation module
and $M \leq (C_{q-1} \times L) \cdot 2$.
Here, $2n \in \{m-2,m-1\}$ and hence all prime divisors of $M$ that are coprime to $q-1$ are at most $m$. In particular, 
$\ell_1 \leq m \leq 2n+2$. Using \eqref{r20}, we deduce that $\ell_1 = 2n \pm 1$ and $f=1$. Note that the only element order divisible 
by $\ell_1$ in $\SSS_m$ is $\ell_1$. If $\eps=+$, then by \eqref{r21} we either have $\ell_2 > 2n+2$ (and coprime to 
$q-1$) and hence $y$ cannot be contained in $M$, or $r > 2n+2 \geq m$. In the latter case $x^{\ell_1(q-1)}=1$, but $|x|=r$ is coprime to $q-1$, whence
$x^{\ell_1}=1$, which is impossible. If $\eps=-$, then by \eqref{r21} we have $\max(r,\ell_2) > 2n+2$ with $\gcd(q-1,r\ell_2)=1$, and so
we reach a contradiction as in the previous case.

\smallskip
Suppose $M$ is in (2.7). Since $2|e \geq 2n-2 \geq 8$, from \cite[Table  5]{GPPS} we see that $(e,2n,\ell_1) = (10,10,11)$, $(10,12,11)$, $(12,12,13)$, $(18,18,19)$, $(18,20,19)$, $(22,22,23)$, $(22,24,23)$, or $(28,28,29)$. In the three cases with $e=2n-2$: $(10,12,11)$, $(18,20,19)$, and $(22,24,23)$, we have
$\eps=+$, $2|n$, $\ell_2 \equiv 1 \pmod{n-1}$ and $\ell_2 \neq \ell_1=2n-1$, which together imply $\ell_2 \geq 31$ and so $\ell_2$ does not divide $|M|$, a
contradiction. In the following three cases with $e=2n$ (so $\eps=-)$: $(10,10,11)$, $(22,22,23)$, and $(28,28,29)$, we have
$\ell_2 \geq 17$, $41$, $53$, respectively, and so $\ell_2$ does not divide $|M|$, again a
contradiction. In the case of $(12,12,13)$, i.e. $L = 6 \cdot \mathrm{Suz}$, we have $p > 2$, so by \eqref{r20} the power $r$ of 
$\ell_1=13$ satisfies $r \geq 37$, i.e. $r \geq \ell_1^2$.  
In the case of $(18,18,19)$, i.e. $L = 3 \cdot \mathrm{J}_3$, we have $p > 3$, so by \eqref{r20} the power $r$ of 
$\ell_1=19$ satisfies $r \geq 55$, i.e. $r \geq \ell_1^2$. In both of these two cases, $r=|x|$ is coprime to $q-1$, and hence $M$ cannot have any element of such order $r \geq \ell_1^2$, a contradiction.

\smallskip
Suppose we are in (2.8). Since $n \geq 5$, we have no example from \cite[Table 6]{GPPS}. 

\smallskip
Finally, suppose $M$ is in (2.9). 
Since $n \geq 5$, for the groups in \cite[Table 7]{GPPS} we see that $(2n,e,\ell_1,L)$ is one of
$(12,12,13,2 \cdot G_2(4))$, $(14,12,13,\tw2 B_2(8) \mbox{ or } G_2(3))$, and $(18,16,17,\Sp_4(4))$. In the first two cases, 
$\ell_2 = \ppd(p,5f) \geq 11$ cannot divide $|M|$. In the third case, 
$\ell_2 = \ppd(p,8f) \neq 17$, so $\ell_2 \geq 41$ cannot divide $|M|$.

Next we consider the possibilities for $L$ listed in \cite[Table 8]{GPPS}. We will again use the upper bounds for $\meo(L)$ listed in \cite[Table 3]{GMPS}. The construction of our elements $x$ and $y$ ensures that their order is the same order in 
$\PGL(V)$, and hence in $\Aut(L)$ as $L$ acts absolutely irreducibly on $V$.
The first case is $L/\ZB(L)=\PSL_m(t)$ with $m \geq 3$ a prime and 
$$2n+1 \geq e+1=\ell_1= (t^m-1)/(t-1) \geq 2n,$$ 
so in fact $\ell_1=2n+1$, $e=2n$, and $\eps=-$. Here, $\meo(L)=\ell_1\leq 2n+1$,  so by \eqref{r20} we must have $r=\ell_1$, and hence we are in one of the exceptions of \eqref{r21}. Thus 
$2n=(t^m-t)/(t-1) \in \{10,18\}$ (as $\ell_1 =2n+1 \neq 21$), which is impossible.

The second case is $L/\ZB(L)=\PSU_m(t)$ with $m \geq 3$ a prime and 
$2n+1 \geq e+1=\ell_1= (t^m+1)/(t+1) \geq 2n$, so in fact $\ell_1=2n+1$, $e=2n$, and $\eps=-$. Here, $\meo(L) \leq 2\ell_1+2 < 3\ell_1$, so by \eqref{r20} we must have $r=\ell_1$, and hence we are in one of the exceptions of \eqref{r21}. Thus 
$2n=(t^m-t)/(t+1) \in \{10,18\}$  (as $\ell_1 =2n+1 \neq 21$), which is possible only when $2n=10$ and $L = \SU_5(2)$. But in this case 
$|y|=\ell_2=\ppd(p,8f) \geq 17$, and so $\ell_2 \nmid |M|$, again a contradiction. 

The third family is $L/\ZB(L)=\PSp_{2m}(t)$, where either $\ell_1=(t^m+1)/2$ and $2 \nmid t$, or $\ell_1=(3^m-1)/2$, $t=3$, and $m$ is a prime. 
It is easy to see that $t=t_0^{2^a}$ for a prime $t_0$ in the first case. As $H=\OB^2(H)$, in either case we have $H \leq \OB^2(M) \leq \ZB(M)L$
(as $\Out(L)$ is a $2$-group). Now we can check that the element $x$ of order $\ell_1$ generates a maximal torus in $L/\ZB(L)$, and hence $r=\ell_1=e+1=2n+1$, which implies $n \in \{5,7,9,10,11\}$ by \eqref{r21}. As $2n=e=\ell_1-1$, this cannot happen for the indicated $\ell_1=(t^m \pm 1)/2$.

In the remaining families we have $L/\ZB(L)= \PSL_2(t)$ with $t \geq 7$. Since $\max(\ell_1,\ell_2) \geq 11$ by \eqref{r20}, in fact we have $t \geq 11$, and 
$\meo(L) = t+1$ by \cite{GMPS}. Here, $(t-1)/2 \leq 2n$, so $r \leq t+1 \leq 4n+2 < 6n-5$. Hence we must be in one of the exceptions listed 
in \eqref{r21}, and thus $n \in \{5,7,9,10,11\}$. In the case $t=2^c$ we have $2n \in \{t,t \pm 1\}$, and none fits in the indicated range. So $2 \nmid t$.
We will now take into account the value of $q$ listed in these exceptions. When $\eps=+$, we have $q=2$ and $n = 5,7,11$, or $(q,n) = (3,10)$, in which cases
we can take $\ell_2 = 31$, $127$, $89$, or $257$, respectively, which all exceed $\meo(L) \leq 4n+2$. When $\eps=-$, we have $q=2$ and $n = 5,9,10$, or $(q,n) = (3,9)$, in which cases we can take $\ell_2 = 17$, $257$, $19$, or $193$, respectively. Since $\meo(L) \leq 4n+2$, we are left with the  
two cases $(q,n) = (2,5)$, $(2,10)$. If $(q,n) = (2,5)$, then $t = 9$, $11$, or $19$, and so $\ell_2=17$ does not divide $|\Aut(L)|$. 
If $(q,n) = (2,10)$, then $t = 19$, or $41$. Since $\ell_1=41$ divides $|\Aut(L)|$, we get $t=41$. But in this case $\ell_2=19$ again does not 
divide $|\Aut(L)|$, the final contradiction.

This completes the proof of Theorem \ref{main1} and Theorem \ref{main2} unless $S$ is not a sporadic simple group.

\section{Proof of Theorems \ref{main1}  and \ref{main2}: The sporadic groups}

Here we record some data describing our verification of Theorems \ref{main1} and \ref{main2} for the sporadic simple groups. Typically, we will begin each entry with the name of the group $S$, followed by its order $N$, the isomorphism type of its Schur multiplier $H_2(S,\Z)$, and the \cite{GAP} command for constructing a certain perfect central extension of the group. This central extension will be denoted $G$, the simple group being $S = G/Z(G)$. If the center of the central extension is not specified, then the extension will be the Schur cover. The end of each entry will be a statement which implies that Theorems \ref{main1} and \ref{main2} hold for the simple group $S$.  Unless otherwise mentioned, our methods employ a random search via the product replacement algorithm using \cite{GAP}.

The default generators of this central extension will refer to the generating pair which can be obtained via the \cite{GAP} command \texttt{GeneratorsOfGroup(<group>);}. For a triple of integers $r,s,t\in\Z_{\ge 1}$, an $(r,s,t)$-pair of a group $G$ is a pair $x,y\in G$ satisfying $|x| = r, |y| = s, |xy| = t$.

\subsection{Mathieu groups}
\begin{itemize}
\item $M_{11}$, $N = 7920$, $H_2(S,\Z) = 0$, \texttt{AtlasGroup("M11");}

$G$ admits a $(3,8,11)$-generating pair Nielsen equivalent to the default generators.
\item $M_{12}$, $N = 95040$, $H_2(S,\Z) = C_2$, \texttt{AtlasGroup("2.M12");}

$G$ admits a $(3,5,11)$-generating pair Nielsen equivalent to the default generators.

\item $M_{22}$, $N = 443520$, $H_2(S,\Z) = C_{12}$, \texttt{AtlasGroup("4.M22");}, $Z(G) \cong C_4$.

$G$ admits a $(3,5,11)$-generating pair Nielsen equivalent to the default generators.

\item $M_{23}$, $N = 10200960$, $H_2(S,\Z) = 0$, \texttt{AtlasGroup("M23");}

$G$ admits a $(7,8,15)$-generating pair Nielsen equivalent to the default generators.

\item $M_{24}$, $N = 244823040$, $H_2(S,\Z) = 0$, \texttt{AtlasGroup("M24");}

The default generators of $G$ is a $(2,3,23)$-pair.

\end{itemize}

\subsection{Janko groups}
\begin{itemize}
\item $J_1$, $N = 175560$, $H_2(S,\Z) = 0$, \texttt{AtlasGroup("J1");}

The default generators of $G$ is a $(2,3,7)$-pair. Thus $J_1$ is a Hurwitz group, see \cite{Con2}.
\item $J_2$, $N = 604800$, $H_2(S,\Z) = C_2$, \texttt{AtlasGroup("2.J2");}

$J_2$ is a Hurwitz group, see \cite{Con2}; in fact the default generators of \texttt{AtlasGroup("J2");} is a $(2,3,7)$-pair. The Schur cover $G$ admits a $(7,7,3)$-generating pair which is Nielsen equivalent to its default generators.

\item $J_3$, $N = 50232960$, $H_2(S,\Z) = C_3$, \texttt{AtlasGroup("3.J3");}

$G$ admits a $(8,17,19)$-generating pair Nielsen equivalent to the default generators.

\item $J_4$, $N = 86775571046077562880$, $H_2(S,\Z) = 0$, \texttt{AtlasGroup("J4");}

$J_4$ is a Hurwitz group \cite{Con2}, and thus admits a $(2,3,7)$-generating pair.
\end{itemize}

\subsection{Conway groups}

\begin{itemize}
\item $\text{Co}_1$, $N = 4157776806543360000$, $H_2(S,\Z) = C_2$, \texttt{AtlasGroup("2.Co1");}

$G$ admits both a $(15,15,23)$-generating pair as well as a $(13,20,21)$-generating pair.

\item $\text{Co}_2$, $N = 42305421312000$, $H_2(S,\Z) = 0$, \texttt{AtlasGroup("Co2");}

$G$ admits a $(5,9,28)$-generating pair Nielsen equivalent to the default generators.

\item $\text{Co}_3$, $N = 495766656000$, $H_2(S,\Z) = 0$, \texttt{AtlasGroup("Co3");}

$\text{Co}_3$ is a Hurwitz group \cite{Con2}, and thus admits a $(2,3,7)$-generating pair.
\end{itemize}

\subsection{Fischer groups}
\begin{itemize}
\item $\text{Fi}_{22}$, $N = 64561751654400$, $H_2(S,\Z) = C_6$, \texttt{AtlasGroup("3.Fi22");}, $Z(G)\cong C_3$.

$G$ admits a $(7,11,13)$-generating pair Nielsen equivalent to the default generators.
\item $\text{Fi}_{23}$, $N = 4089470473293004800$, $H_2(S,\Z) = 0$, \texttt{AtlasGroup("Fi23");}

$G$ admits a $(11,12,35)$-generating pair Nielsen equivalent to the default generators.
\item $\text{Fi}_{24}'$, $N = 1255205709190661721292800$, $H_2(S,\Z) = C_3$, \texttt{AtlasGroup("3.Fi24'");}

$G$ admits a $(13,17,20)$-generating pair Nielsen equivalent to the default generators.
\end{itemize}

\subsection{Other sporadic groups}

\begin{itemize}
\item Higman-Sims group $\text{HS}$, $N = 44352000$, $H_2(S,\Z) \cong C_2$, \texttt{AtlasGroup("2.HS");}

$G$ admits a $(5,7,11)$-generating pair Nielsen equivalent to the default generators.

\item McLaughlin group $\text{McL}$, $N := 898128000$, $H_2(S,\Z) \cong C_3$, \texttt{AtlasGroup("3.McL");}

$G$ admits a $(5,7,11)$-generating pair Nielsen equivalent to the default generators.

\item Held group \text{He}, $N = 4030387200$, $H_2(S,\Z) = 0$, \texttt{AtlasGroup("He");}

This is a Hurwitz group \cite{Con2}, and thus admits a $(2,3,7)$-generating pair. 

\item Rudvalis group \text{Ru}, $N = 145926144000$, $H_2(S,\Z) \cong C_2$, \texttt{AtlasGroup("2.Ru");}

$G$ admits a $(5,13,29)$-generating pair Nielsen equivalent to the default generators.

\item Suzuki sporadic group $\text{Suz}$, $N = 448345497600$, $H_2(S,\Z) \cong C_6$, \texttt{AtlasGroup("2.Suz");}, $Z(G)\cong C_2$.

$G$ admits a $(5,13,21)$-generating pair Nielsen equivalent to the default generators.

\item O'Nan group \text{O'N}, $N = 460815505920$, $H_2(S,\Z)\cong C_3$, \texttt{AtlasGroup("3.ON");}

$G$ admits a $(19,20,31)$-generating pair Nielsen equivalent to the default generators.

\item Harada-Norton group \text{HN}, $N = 273030912000000$, $H_2(S,\Z) = 0$, \texttt{AtlasGroup("HN");}

\text{HN} is a Hurwitz group \cite{Con2}, and hence admits a $(2,3,7)$-generating pair.

\item Lyons group \text{Ly}, $N = 51765179004000000$, $H_2(S,\Z) = 0$, \texttt{AtlasGroup("Ly");}

\text{Ly} is a Hurwitz group \cite{Con2}, and hence admits a $(2,3,7)$-generating pair.

\item Thompson group \text{Th}, $N = 90745943887872000$, $H_2(S,\Z) = 0$, \texttt{AtlasGroup("Th");}

\text{Th} is a Hurwitz group \cite{Con2}, and hence admits a $(2,3,7)$-generating pair.

\item Baby Monster group \text{B}, $N = 4154781481226426191177580544000000$, $H_2(S,\Z)\cong C_2$.

The Baby Monster can be constructed in \cite{GAP} as \texttt{AtlasGroup("B");}, but its Schur double cover cannot. Nonetheless, one can access the character table of the double cover. The following \cite{GAP} transcript show that the Schur cover of the Baby Monster admits a $(3,5,47)$-pair:

\begin{tabular}{l}
\texttt{gap> tbl := CharacterTable("2.B");} \\
\texttt{CharacterTable( "2.B" )} \\
\texttt{gap> irr := Irr(tbl);;} \\
\texttt{gap> OrdersClassRepresentatives(tbl)\{[8,23,228]\};} \\
\texttt{[ 3, 5, 47 ]} \\
\texttt{gap> Sum(List(irr,ch -> ch[8]*ch[23]*ch[228]/ch[1]));} \\
\texttt{34626119/41013248}
\end{tabular}

Here, the fourth and fifth lines shows that 8th, 23rd, and 228th conjugacy classes (in the table \texttt{tbl}) consist of elements of orders 3,5,47 respectively. Lines 6-7 then show that if $x,y,z$ are representatives of such conjugacy classes, then
$$\sum_{\chi \text{ irr}}\frac{\chi(x)\chi(y)\chi(z)}{\chi(1)} > 0$$
This implies that the Schur cover of the Baby Monster admits a $(3,5,47)$-pair \cite[Theorem 7.2.1]{Serre}. Finally, from the list of the maximal subgroups of the Baby Monster \cite{Atlas}, we find that up to conjugation, the only maximal subgroup which contains an element of order 47 has order $23\cdot 47$. It follows that any (3,5,47)-pair is generating.

\item Monster group \text{M}, $N = 808017424794512875886459904961710757005754368\cdot 10^9$.

The monster \text{M} has a trivial Schur multiplier, so it suffices to note that it is a Hurwitz group \cite{Con2}, and hence admits a $(2,3,7)$-generating pair.

\end{itemize}

\end{document}